\newtheorem{conjecture}{Conjecture}
\newtheorem{theorem}[conjecture]{Theorem}
\newtheorem{lemma}[conjecture]{Lemma}
\newtheorem{proposition}[conjecture]{Proposition}
\newcommand \Aip {{{\mathcal A}_{i+}}}
\newcommand \Aim {{{\mathcal A}_{i-}}}
\newcommand \B {{\mathcal B}}
\newcommand \Bip {{{\mathcal B}_{i+}}}
\newcommand \Bim {{{\mathcal B}_{i-}}}
\newcommand \Po {{\mathcal P}}
\newcommand{\dw}{\downarrow\!}
\newcommand{\up}{\uparrow\!\!}
\newcommand{\Pn}{\mathcal{P}(n)}
\newcommand{\Bn}{\mathcal{B}(n)}
\newcommand{\Bi}{\mathcal{B}_i}
\newcommand{\I}{\mathcal{I}}
\newcommand{\Le}{\mathcal{L}}
\newcommand{\C}{\mathcal{C}}
\newcommand{\co}{\mathrm{comp}}
\newcommand{\D}{\mathcal{D}}
\newcommand{\HH}{\mathcal{H}}
\newcommand{\Ch}{\mathbf{C}}
\newcommand{\F}{\mathcal{F}}
\newcommand{\Gi}{\mathcal{G}_i}
\newcommand{\Gk}{\mathcal{G}_k}
\newcommand{\Nn}{\mathcal{N}}
\newcommand{\A}{\mathcal{A}}
\newcommand{\Bj}{\mathcal{B}_j}
\newcommand{\2}{\mathbf{2}^n}
\begin{document}

\parindent = 0cm
\parskip = .3cm

\title[The width of downsets ]{The width of downsets\vspace{-.5cm}}

\author{Dwight Duffus\vspace{-.5cm}}
\address {Mathematics Department\\
  Emory University, Atlanta, GA  30322 USA\vspace{-.5cm}}
   \email{dwightduffus@emory.edu}
\author{David Howard\vspace{-.5cm}}
\address{Mathematics Department\\
  Colgate University, Hamilton, NY 13346 USA\vspace{-.5cm}}
  \email{dmhoward@colgate.edu}
\author{Imre Leader\vspace{-.5cm}}
\address{Centre for Mathematical Sciences\\
  University of Cambridge, Cambridge CB3 0WB UK}
  \email{leader@dpmms.cam.ac.uk}
\date{\today}
\keywords{convexity, Boolean lattice}
\subjclass[2010]{Primary: 05A9}
\let\thefootnote\relax\footnote{Accepted by the European Journal of Combinatorics - DOI 10.1016/j.ejc.2018.11.005}
\footnote{\copyright 2019. This manuscript version is made available under the CC BY-NC-ND 4.0 license}

\begin{abstract}
How large an antichain can we find inside a given downset in the Boolean lattice $\Bn$? Sperner's 
theorem asserts that the largest antichain in the whole of $\Bn$ has size $\binom{n}{\lfloor n/2\rfloor}$; 
what happens for general downsets?\\

\noindent
Our main results are a Dilworth-type decomposition theorem for downsets, and a new proof of a result of
Engel and Leck that determines the largest possible 
antichain size over all downsets of a given size. We also prove some related results, such as determining 
the maximum size of an antichain inside the downset that we conjecture minimizes this quantity among 
downsets of a given size. 
\end{abstract}

\maketitle

\thispagestyle{empty}

%%%%%%%%%%%%%%%%%
\section{Introduction}\label{S:intro}
%%%%%%%%%%%%%%%%%

The {\it width} $w(X)$ of a finite partially ordered set $X$ is the maximum size of an antichain in $X$.  
By Sperner's theorem \cite{S}, the width of the Boolean lattice $\Bn$, the set of subsets of $[n] = \{1, 2, \ldots, n\}$, 
ordered by containment, is the maximum size of a level, namely $\binom{n}{\lfloor n/2\rfloor}$.  In this paper, we are 
interested in the relationship between the width and size of a {\it downset} $\D$ of $\Bn$, meaning a family 
of sets such that if $X \in \D$ and $Y \subseteq X$ then $Y \in \D$.  

One of our aims is to determine, for $n$ fixed, the maximum width of a downset of a given size 
(see Section \ref{S:peculiar}).  This turns out to be given by the initial segment of that size of a slightly 
nonstandard (total) order on $\Bn$ (see Theorem \ref{T:maxwidth}).  This was proved by Engel and
Leck \cite{EL}. Our proof is rather different, being based on compressions.

On the other hand, to {\it minimize} the width of a downset in $\Bn$ of given size $d$, we have a conjecture that 
the initial segment of $\Bn$ of size $d$ under the {\it binary order} realizes the minimum width (see 
Conjecture \ref{c:binary-min-width}).  This conjecture has been made independently by Goldwasser \cite{G}.  
We work out the width of this downset (with an argument that is 
perhaps more involved that it ought to be -- see Proposition \ref{P:Goldwasser}).  Here we are motivated by 
a beautiful 35-year-old conjecture of Daykin and Frankl \cite{DF} for convex subsets $\C$ of $\Bn$.  Recall 
that $\C$ is {\it convex} in $\Bn$ if whenever $X, Y \in \C$ with $X \subseteq Z \subseteq Y$ then also 
$Z \in \C$.

\begin{conjecture} {\rm(Daykin and Frankl \label{c:convex}\cite{DF})}
For any nonempty convex subset $\C$ of  $\Bn$,\\
$$\frac{w(\C)}{|\C|}\ \ge \ \frac{\binom{n}{\lfloor n/2 \rfloor}}{2^n} \ .$$
\end{conjecture}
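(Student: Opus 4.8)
The plan is to prove the inequality by whittling $\C$ down to a tractable subclass of convex sets and then establishing the bound on that subclass; write $\beta_n=\binom{n}{\lfloor n/2\rfloor}/2^n$ and recall that $\beta_n$ is weakly decreasing in $n$, with $\beta_{2m-1}=\beta_{2m}$, so every Boolean interval $[A,B]$ already satisfies $w([A,B])/|[A,B]|=\beta_{|B\setminus A|}\ge\beta_n$ — this fixes the equality cases (the cube and, when $n$ is even, the codimension-$1$ subcubes) and calibrates how much slack we have. The first reduction is to connected convex sets: if $\C$ splits as a disjoint union $\C_1\sqcup\C_2$ with no element of $\C_1$ comparable to any element of $\C_2$, then each $\C_i$ is convex, $w(\C)=w(\C_1)+w(\C_2)$ and $|\C|=|\C_1|+|\C_2|$, so the ratio for $\C$ is a mediant of the two ratios and it suffices to treat each piece; iterating, we may assume $\C$ is connected under the relation generated by comparability. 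The second observation is that such a $\C$ is rank-filled: since a comparable pair $X\subseteq Y$ in $\C$ with $|Y\setminus X|=d$ drags the whole interval $[X,Y]\cong\B(d)$ into $\C$, following a path of comparabilities from a minimum-rank element to a maximum-rank element shows that $\C$ meets every rank in its span $[a,b]$, and — more ambitiously — one wants to leverage this to show $\C$ has many elements at the middle ranks.

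The heart of the argument is therefore a fatness estimate: for a connected convex set $\C$ with rank-span $[a,b]$, bound $\max_k c_k$ (where $c_k=|\C\cap\binom{[n]}{k}|$), equivalently exhibit a large antichain, in terms of $m:=b-a$ and $|\C|$, the target being exactly $\max_k c_k\ge\beta_n|\C|$. Because $\C$ is convex, every maximal chain meets $\C$ in an interval of at most $m+1$ ranks, so $\max_k c_k\ge|\C|/(m+1)$, which already settles the case $m+1\le 1/\beta_n$; for larger $m$ one must use that a connected convex set spanning many ranks cannot be uniformly thin, since any comparable pair at Hamming distance $d$ forces a copy of $\B(d)$ whose middle level has $\binom{d}{\lfloor d/2\rfloor}$ elements. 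The cleanest route would be to show the profile $(c_k)$ of a convex set is, in a suitable majorization sense, dominated by the cube's profile $\bigl(\binom n0,\dots,\binom nn\bigr)$ — a statement that would yield the bound with equality precisely at the cube and the codimension-$1$ subcubes, and which is the convex analogue of the normalized-matching facts that pin down the widest level of a downset. An alternative packaging is linear-programming duality: by Dilworth, $w(\C)$ equals the maximum fractional antichain, so it suffices to produce weights $\mu\ge 0$ on $\C$ with $\sum_{X\in P\cap\C}\mu(X)\le 1$ for every maximal chain $P$ (by convexity a constraint about rank-intervals) and $\sum_X\mu(X)\ge\beta_n|\C|$; the naive guess $\mu\equiv\beta_n$ is valid as soon as the span is small, so the real work is to tilt $\mu$ toward the middle ranks in a way calibrated to the profile of $\C$.

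\textbf{The main obstacle} is exactly this last step, and it is a genuine one: this is the Daykin--Frankl conjecture, open for some thirty-five years, so any plan that feels routine is surely missing something. The trouble is that width is a global quantity — a minimum chain cover — that resists the local compression and shifting moves one would normally use to push a problem to its extremal configuration, and convexity constrains the profile $(c_k)$ only loosely, with no clean description of which sequences occur, so the profile-majorization statement one wants is simultaneously the crux and itself unknown in general. Moreover the constant is delicate: combining even a strong fatness bound $\max_k c_k\ge\binom{m}{\lfloor m/2\rfloor}$ with the crude $|\C|\le(m+1)\binom n{\lfloor n/2\rfloor}$ fails to recover $\beta_n$ once $m$ approaches $n$, so the extremal cube must be handled by an argument that is tight for it yet robust enough to cover every convex set of nearly full rank-span — producing such an argument (or the right weighting $\mu$, or a width-non-increasing reduction that provably exists) is precisely what remains to be done.
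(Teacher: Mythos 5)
What you have written is honest but it is not a proof, and you say so yourself: the statement is Conjecture \ref{c:convex}, the Daykin--Frankl conjecture, which has been open for some thirty-five years, and the paper does not prove it either. The paper only establishes special cases and structural evidence: Theorem \ref{T:alpha} verifies the inequality for initial segments of the binary order (via the Greene--Kleitman symmetric chain decomposition and the count of special points, Lemma \ref{L:special}), Conjecture \ref{c:binary-min-width} would then yield the downset case, and Theorem \ref{T:SD} (skipless Dilworth partitions of convex sets) is offered as a structural step toward understanding width versus size. So there is no ``paper proof'' to match your argument against; the correct assessment is that your proposal has a genuine gap, located exactly where you place it.

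Concretely: your preliminary reductions are sound but only cover the easy regime. The mediant argument reducing to connected convex sets is correct (it is the same observation as step {\bf (1)} in the proof of Theorem \ref{T:SD}), the computation $w([A,B])/|[A,B]|=\binom{d}{\lfloor d/2\rfloor}/2^d\ge\binom{n}{\lfloor n/2\rfloor}/2^n$ for an interval of height $d$ is correct, and the bound $w(\C)\ge\max_k c_k\ge|\C|/(m+1)$ settles the conjecture only when the rank-span satisfies $m+1\le 2^n/\binom{n}{\lfloor n/2\rfloor}=\Theta(\sqrt n)$, which is essentially the ``sufficiently small length'' case already noted as known. Everything beyond that is missing: the ``fatness estimate'' $\max_k c_k\ge\bigl(\binom{n}{\lfloor n/2\rfloor}/2^n\bigr)|\C|$ is in fact a strictly \emph{stronger} claim than the conjecture (since $w(\C)\ge\max_k c_k$, with the inequality sometimes strict for convex sets, as Figure \ref{F:cube1} illustrates), and neither it nor the profile-majorization statement you would like is established; the LP-duality reformulation via fractional antichains is an equivalent restatement of the problem, not progress toward it, and you give no candidate weighting $\mu$ that works once $m$ is comparable to $n$. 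As you correctly diagnose, producing the tilted weighting, the majorization lemma, or a width-non-increasing compression is precisely the open content, so the proposal should be read as a survey of the obstructions rather than as a proof.
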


We verify this when  $\C$ is an initial segment of the binary order (see Theorem \ref{T:alpha}).  Thus, 
Conjecture \ref{c:convex} specialized to downsets would follow from Conjecture \ref{c:binary-min-width}.

Still concerning convex subsets $\C$ of $\Bn$, Dilworth's theorem \cite{D} says that any convex set (indeed, any 
partially ordered set) has a partition into $w(\C)$-many chains.  In the case of convex subsets, we could 
ask for much more, namely, that the chains are {\it skipless}, meaning that they skip no levels of $\Bn$ 
(in other words, successive elements
of a chain increase in size by exactly 1).  We show that this in indeed the case (Theorem \ref{T:SD}).   As an
application, we determine precisely when adding an element to a downset of $\Bn$ increases its width 
(Proposition \ref{P:general}).

We also consider a related problem. If we have a given number of $r$-sets, and we wish to minimize the 
size of the downset they generate, then by the Kruskal-Katona theorem (\cite{Ka}, \cite {Kr}, cf. \cite{BB}) 
we should take an initial segment of $[n]^{(r)}$ (the family of all $r$-sets from $[n]$) under the colexicographic 
order.  Thus,  the size of the 
downset is independent of $n$. Now, if we instead wished to {\it maximize} the size of the downset, then 
this is not a sensible question, as we would just take some disjoint $r$-sets. However, this is in some sense 
cheating, because the downset generated has much larger antichains that the original family of $r$-sets.

So a more natural question is as follows. Call a family of $r$-sets {\it top-heavy} or simply {\it heavy} if there 
is no larger antichain in the downset it generates. And now the question would be: among heavy families of 
$r$-sets of given size, which one generates the largest downset? Here we are allowing $n$ to vary.  We make a 
conjecture on this value, and give some (rather weak) bounds. 

The paper is organized as follows.  In Section \ref{S:peculiar}, we find the maximum width of a downset
of given size in $\Bn$. Section \ref{S:binary} 
contains our results and conjectures on the minimization problem and its relation to the Daykin-Frankl 
conjecture.  In Section \ref{S:SD} we prove that every convex subset of $\Bn$ has a partition into 
width-many skipless chains.  This result is then applied in Section \ref{S:downsets} to describe when 
the addition of a single new element increases the width of a downset in $\Bn$.   Finally, in 
Section \ref{S:shadows} we consider the problem about heavy families described above.

Combinatorial terms and notation are standard -- see e.g. Bollob\'as \cite{BB} for these and further background.

%%%%%%%%%%%%%%%%%%%%
\section{The Maximum Width of a Downset}\label{S:peculiar}
%%%%%%%%%%%%%%%%%%%%

Among all downsets of $\Bn$ of given cardinality, which one {\it maximizes} the width?
The answer is that we should take initial segments of some ordering, but interestingly it
is not one of the `standard' orderings on $\Bn$. 

Recall that in the {\it binary} ordering on $\Bn$ we have $A<B$ if $\max (A \triangle B) \in B$, 
and that in the {\it simplicial} ordering  on $\Bn$ we have $A<B$ if either $|A| < |B|$ or else 
$|A| = |B|$ and $\min (A \triangle B) \in A$. Thus in the binary ordering we `go up in
subcubes', and also the restriction to a level is the colex order, while in the
simplicial ordering we `go up in levels', with the restriction to a level being the
lex ordering. These are the standard two orderings on $\Bn$: for example, initial
segments of the simplicial ordering solve the vertex-isoperimetric problem while
initial segments of the binary ordering solve the edge-isoperimetric problem 
(see e.g. [1] for details).

In contrast, here we need a modification of the simplicial ordering. Let us
define the {\it level-colex} ordering on $\Bn$ by setting
$A<B$ if either $|A| < |B|$ or else $|A| = |B|$ and $\max (A \triangle B) \in B$. 
In other words, we go up in levels, but in each level we use colex instead of lex. 
Our aim is to give a direct proof of a lovely result of Engel and Leck \cite{EL} that,
among downsets of a given size, initial segments of the level-colex order maximize the width.  
The point is that, if we are going to take a downset 
with say all sets of size less than $k$ and also some $k$-sets, then we want those 
$k$-sets to have small shadow -- so by the Kruskal-Katona theorem (\cite{Ka}, \cite {Kr}, cf. \cite{BB}) 
we should take those $k$-sets to be an initial segment of the colex order.
Interestingly, we know of no other problem for which this level-colex ordering provides 
the extremal examples.

Part of the difficulty in proving this result arises from the fact that, in an initial segment of the level-colex 
ordering, the set of maximal elements may not form a maximum-sized antichain. The 
maximal elements do often achieve the width (for example, when we have all sets of size 
at most $k$ for $k \le (n + 1)/2$), but not always (for example, when our initial segment has size 1 greater 
than this). We also mention in passing that if one wished to prove the result only for 
certain sizes (namely when our initial segment consists of all sets of size at most $k$)
then other methods are available.

Our method is based on the use of `codimension-1 compressions', which were
originally introduced in \cite{BL}.
We need a small amount of notation.  It is slightly more convenient to view $\B(n)$
explicitly as a power set -- we will write $\Po(X)$ for the power set of a set $X$.
For a set system $\A$ on $[n]$ (i.e. $\A \subseteq \Bn$),
and $1 \le i \le n$, the {\it i-sections} of $\A$ are the set systems
on $[n] -\{i\}$ given by
$$\Aim=\{A \in \Po ([n]-\{i\}):\,A \in \A\},$$
$$\Aip=\{A \in \Po ([n]-\{i\}):\,A \cup \{i\} \in\A\}.$$ 

We can define the level-colex ordering on $\Po (X)$ whenever $X$ is
(totally) ordered -- again, $A$ precedes $B$ if either 
$|A| <  |B|$ or else $|A| = |B|$ and $\max (A \triangle B) \in B$. 
It is easy to see that if $\A$ is an initial segment of
the level-colex order on $\Bn$ then both $\Aim$ and $\Aip$ are initial segments of
the level-colex order on $\Po ([n]-\{i\})$.

For any $\A \subseteq \Bn$ and $1 \leq i \leq n$, we define a system $C_i(\A)
\subseteq \Bn$,
the {\it i-compression} of $\A$, by giving its $i$-sections:
$C_i(\A)_{i+}$ is the set of the first $|\A_{i+}|$ elements in the
level-colex order on $\Po([n]-\{i\})$, and similarly for
$C_i(\A)_{i-}$. 
In other words, $C_i$ `compresses'
each $i$-section of $\A$ into the level-colex order. We say that $\A$ is
{\it i-compressed} if $C_i(\A)=\A$. Thus for example
an initial segment of the level-colex order on
$\Bn$ is $i$-compressed for every $i$. Note that the $i$-compression of a
downset is again a downset (because any two initial segments of an ordering are
nested, in the sense that one is a subset of the other).

A natural question to ask is whether a set system that is $i$-compressed for all $i$
is necessarily an initial segment of the level-colex order. But in fact it is easy to see 
that this is not the case. For example, for $n=3$ we may take the set system
$\{\emptyset,\{1\}, \{2\}, \{1,2\}\}$.
However, and this is one the key properties of this kind of compression, it turns
out that this is essentially the unique such example.

\begin{lemma}\label{L:compression}
    Let $\A \subseteq \Bn$ be $i$-compressed for all $i$. Then
    either $\A$ is an initial segment of the level-colex order on $\Bn$, or else
    $n$ is odd (say $n=2r+1$) and     
    $$\A = [n]^{(\leq r)} - \{\{r+2, r+3, \ldots, n\}\} \cup \{\{1,2,\ldots,r+1\}\},$$    
     or else $n$ is even (say $n=2r$) and 
    $$\A=[n]^{(<r)} \cup \{A \in [n]^{(r)}:\; n \notin A\} - \{\{r,r+1, \ldots, n-1\}\} \cup \{\{1,2, \ldots, r-2,r-1,n\}\}.$$ 
\end{lemma}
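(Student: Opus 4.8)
The strategy is an induction on $n$, exploiting the fact that the $i$-sections $\Aim$ and $\Aip$ are themselves $i$-compressed-for-all-$i$ systems on the $(n-1)$-element ground set $[n]-\{i\}$, so by the inductive hypothesis each of them is either a level-colex initial segment or one of the (at most one) exceptional configurations. Since $\A$ is a downset only if... well, we are not assuming $\A$ is a downset, but note first that $\Aip \subseteq \Aim$ is forced for every $i$ by $i$-compression together with the fact that $A \cup \{i\} \in \A \Rightarrow$ nothing — so instead I first record the genuinely forced containment: if $\A$ is $i$-compressed then $\Aip$ and $\Aim$ are both initial segments of level-colex, hence nested, so either $\Aip \subseteq \Aim$ or $\Aim \subseteq \Aip$. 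The first substantive step is to pin down which of the two holds, using $j$-compression for $j \neq i$ to transfer information between sections; I expect that for ``most'' $i$ one gets $\Aip \subseteq \Aim$, and the analysis of when this fails is exactly what produces the exceptional families.

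Concretely, I would first dispose of the case where $\A$ is an initial segment of level-colex and reduce to the situation where some section is exceptional or the two sections, though both being initial segments, do not ``fit together'' into a level-colex initial segment on $\Bn$. The key gluing fact to establish is: $\A$ is a level-colex initial segment on $\Bn$ if and only if $\Aip$ and $\Aim$ are level-colex initial segments on $\Po([n]-\{i\})$ with $|\Aim|$ and $|\Aip|$ satisfying the obvious compatibility (namely that the set of the first $|\Aim|$ elements with $i$ appended, interleaved appropriately by size, together with the first $|\Aip|$ elements, is again an initial segment) — this is a purely combinatorial statement about the level-colex order that I would verify by analysing, level by level, how many $k$-sets of $\Bn$ contain a fixed element $i$. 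So the whole problem becomes: classify the pairs $(s_-, s_+) = (|\Aim|, |\Aip|)$ that are consistent for every choice of $i \in [n]$ simultaneously and do not already come from a level-colex initial segment.

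The next step is the counting that makes the exceptional examples appear. Write $\A$'s level sizes as $a_k = |\A \cap [n]^{(k)}|$. Being $i$-compressed for all $i$ strongly constrains the $a_k$: in fact I expect to show that all $a_k$ except possibly one are ``full'' ($0$ or $\binom{n}{k}$), i.e. $\A$ differs from $[n]^{(\le m)}$ in at most one level, and moreover in that level $\A$ restricted to the level is compressed for all $i$ in the sense of the usual compression on a single level. Now a family in $[n]^{(k)}$ that is $i$-compressed for all $i$ (with respect to level-colex, which on a single level is colex) and is neither $\emptyset$ nor everything need not be a colex initial segment — but the classification of such families is small, and the two exceptional shapes in the statement (with $n = 2r+1$, the top level below the middle; with $n = 2r$, a specific $r$-level family) are exactly the ones where removing one set and adding another keeps every $i$-section an initial segment. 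I would carry out the level-by-level count $|\,\{A \in [n]^{(k)} : i \in A\}\,| = \binom{n-1}{k-1}$ to see precisely for which $k$ a ``nearly-colex'' family on level $k$ can have both its $i$-sections be initial segments for all $i$ at once; this forces $k \in \{r, r+1\}$ and pins down the set being removed and the set being added.

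The main obstacle, I expect, is exactly this last classification: showing that the ``swap'' (remove one set, add another) that preserves $i$-compressedness for all $i$ simultaneously is unique, and matching it to the displayed formulas including getting the parity split right. The induction's base cases ($n \le 2$ or $n=3$, where $\{\emptyset,\{1\},\{2\},\{1,2\}\}$ is the genuine exception and must be checked to agree with the $n=2r$ formula at $r=1$... which it does not literally, so I'd handle $n \le 3$ by hand and run the induction for $n \ge 4$) and the bookkeeping of when $\Aip \subseteq \Aim$ versus the reverse are comparatively routine but fiddly. A clean way to organize the hard step is: assume $\A$ is not a level-colex initial segment, take the minimal $k$ with $\A \cap [n]^{(k)}$ not a colex initial segment (such $k$ exists and, by the previous step, is the unique ``defective'' level), and then read off from $i$-compressedness for all $i$ that the symmetric difference between $\A \cap [n]^{(k)}$ and the colex initial segment of the same size is a single pair $\{B \text{ removed}, B' \text{ added}\}$ with $B'$ the colex-first $k$-set not in the initial segment and $B$ determined by requiring $B \setminus \{i\}$ and $B' \setminus \{i\}$ (and the $+i$ versions) to land correctly for every $i$; the parity constraint drops out because the relevant binomial coefficients $\binom{n-1}{k-1}$ and $\binom{n-1}{k}$ must straddle a level boundary in a way only possible near the middle.
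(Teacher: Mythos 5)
Your plan rests on two structural claims that are false, and they fail exactly at the exceptional families the lemma has to produce. You assert that being $i$-compressed for all $i$ forces all level sizes $a_k$, except possibly one, to be $0$ or $\binom{n}{k}$, and later that if $\A$ is not a level-colex initial segment then there is a minimal level $k$ on which $\A\cap[n]^{(k)}$ fails to be a colex initial segment ("such $k$ exists"). Both claims are contradicted by the $n=3$ example $\{\emptyset,\{1\},\{2\},\{1,2\}\}$ and by the general exceptional families: there, two consecutive levels are partial, and \emph{every} level of $\A$ is a colex initial segment of that level — the failure to be a level-colex initial segment comes from having a nonempty level sitting above an incomplete one, not from a single "defective level". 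Consequently the hard final step of your plan (locate the unique defective level and classify the one-set swap inside it) has nothing to latch onto; as structured, the argument cannot recover the exceptional configurations, which for odd $n$ straddle two levels (one $r$-set removed, one $(r+1)$-set added). The "gluing fact" you propose to mediate the induction is also stated circularly ("the obvious compatibility, namely that \dots is again an initial segment"), so it does not carry any content that could substitute for the missing classification.

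The missing idea is pointwise and far simpler than a sectionwise induction on $n$. If $\A$ is not an initial segment, pick $A\in\A$, $B\notin\A$ with $B<A$ in level-colex. For any $i$ with $i\in A\cap B$ or $i\notin A\cup B$, both sets have traces in the same $i$-section, which by $i$-compression is an initial segment of level-colex on $\Po([n]-\{i\})$, and $B$'s trace precedes $A$'s, forcing $B\in\A$; hence $A$ and $B$ disagree at every coordinate, i.e.\ $A=B^{c}$. So each member of $\A$ has at most one earlier non-member (its complement) and each non-member has at most one later member; taking $A$ to be the last member and $B$ the first non-member shows $\A$ is the initial segment ending at $A$ with the immediate predecessor $B=A^{c}$ of $A$ deleted, and complementarity together with adjacency in level-colex pins $B$ down to the two displayed cases according to the parity of $n$. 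Your induction framework might conceivably be repaired (the sections of an exceptional family are again initial segments or exceptional, and one would have to analyse how they glue across the two partial levels), but as written the plan's core counting and classification steps are incorrect, so there is a genuine gap rather than a complete alternative proof.
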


\begin{proof}
Suppose that $\A$ is not an initial segment of the level-colex order on $\Bn$. Then 
there are sets $A,B \in \Bn$ with $A \in \A$, $B \notin \A$, and $B < A$ in the level-colex order. 
For any $i$, we cannot have $i \in A,B$ or $i \notin A,B$, since $\A$ is $i$-compressed. It follows that
$A=B^c$.

Thus, for any $A \in \A$, there is at most one $B < A$ such that $B \notin \A$,
namely $A^c$, and similarly, for any $B \notin \A$, there is at most one $A > B$
such that $A \in \A$. Taking $A$ to be the last set in $\A$, and $B$ to be the
first set not in $\A$, it follows immediately that $\A=\{C \in \Bn:\, C \leq A\} - \{B\}$, with 
$B$ the immediate predecessor of $A$ and $B=A^c$.  However, by the definition of the 
level-colex order, this can only happen in one case: if $n$ is odd, say $n=2r+1$, then 
$B$ must be the final $r$-set in colex, and if $n$ is even, say $n=2r$, then $B$ must 
be the final $r$-set in colex that does not contain $n$.
\end{proof}

We are now ready to prove that initial segments of the level-colex order maximize the width.

\begin{theorem}\label{T:maxwidth}
  Let $\A$ be a downset in $\Bn$, and let $\I$ be the set of the first $|\A|$ elements in the 
  level-colex order on $\Bn$.  Then $w(\A) \leq w(\I)$.
\end{theorem}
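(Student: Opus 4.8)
The plan is to show that the $i$-compression $C_i$ never decreases the width, when applied to a downset, so that we may pass from $\A$ to a set system that is $i$-compressed for all $i$, and then invoke Lemma~\ref{L:compression}. Concretely: first I would check that $C_i(\A)$ is again a downset (this is already observed in the text, since the $i$-sections are nested initial segments). Next, iterating $C_i$ over $i=1,\dots,n$ and repeating, one reaches a fixed point $\A'$ that is $i$-compressed for every $i$ (a standard compression argument: each step weakly decreases a suitable potential function such as the sum of level-colex ranks, and strict decrease happens only finitely often). Crucially $|\A'|=|\A|$ throughout. Then Lemma~\ref{L:compression} tells us $\A'$ is either $\I$ itself, or one of the two sporadic sets (for $n$ odd or even). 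So the theorem reduces to two things: (a) $w(C_i(\A)) \ge w(\A)$ for any downset $\A$, and (b) in the two sporadic cases, $w(\A') \le w(\I)$.

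For (a), the key step is a local argument about a single antichain under $i$-compression. Let $\mathcal{F}$ be a maximum antichain in $\A$, and split it as $\mathcal{F} = \mathcal{F}_{i-} \sqcup \mathcal{F}_{i+}$ according to whether $i \notin F$ or $i \in F$, viewing each piece as a family on $[n]-\{i\}$. The obstruction is that $\mathcal{F}_{i-}$ and $\mathcal{F}_{i+}$ need not separately be antichains-across: a set $F \in \mathcal{F}_{i+}$ and a set $G \in \mathcal{F}_{i-}$ with $F \subsetneq G$ is impossible in $\mathcal{F}$ (since $F\cup\{i\} \not\subseteq G$, as $i\notin G$ — actually we need $G \subsetneq F$ forbidden, i.e. $G \subseteq F$ forces $G\cup\{i\}\subseteq F\cup\{i\}\in\mathcal{F}$, contradiction), so the real constraint is that there is no $G\in\mathcal{F}_{i-}$, $F\in\mathcal{F}_{i+}$ with $G\subseteq F$. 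One then replaces $\mathcal{F}_{i-}$ by an initial segment of level-colex of the same size and similarly $\mathcal{F}_{i+}$: since initial segments are nested, one of the two resulting families is contained in the downset-closure of the other, and one checks using the down-set property of $\A$ (hence of the $i$-sections) that the compressed families still sit inside $C_i(\A)_{i-}$ and $C_i(\A)_{i+}$ and still form an antichain of the same total size. The delicate point — and I expect this to be the main obstacle — is verifying that compressing the two sections does not create a comparability between a set in one section and a set in the other; this is exactly where the "no $G\subseteq F$" condition must be shown to be preserved, and it uses the fact that the shadow of an initial segment of colex is again an initial segment of colex (Kruskal--Katona in its compressed form), together with the level structure.

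For (b), I would handle the sporadic downsets directly. In the odd case $n=2r+1$, $\A' = [n]^{(\le r)} \setminus \{\{r+2,\dots,n\}\} \cup \{[r+1]\}$, which has the same size as $\I = [n]^{(\le r)}$, and $w(\I) = \binom{n}{r}$; I must show $w(\A') \le \binom{n}{r}$. Any antichain in $\A'$ meets level $r+1$ only in $[r+1]$, meets level $\le r-1$ not at all if it meets level $r$ heavily, etc. — a short case analysis: an antichain using $[r+1]$ can use at most $\binom{n}{r} - |\partial\{[r+1]\}\cap(\text{level }r)| + (\text{small correction})$ sets of size $r$, and one bounds the loss from removing $\{r+2,\dots,n\}$ against the one gained set $[r+1]$ plus at most the number of $r$-subsets of $[r+1]$ that must be excluded; a direct count shows the total never exceeds $\binom{n}{r}$. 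The even case is entirely analogous, comparing against $\I = [n]^{(<r)} \cup \{A\in[n]^{(r)}: n\notin A\}$ whose width is $\binom{n-1}{r-1} + \binom{n-1}{r}=\binom{n}{r}$ achieved on a mixed antichain, and checking the single-element swap costs nothing. Assembling (a) and (b): $w(\A) \le w(\A') \le w(\I)$, which is the theorem.
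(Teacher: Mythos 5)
Your overall framework---$i$-compressions, termination of the compression process, Lemma \ref{L:compression} to classify the fixed points, and a direct check of the two sporadic families---is the same as the paper's. But the load-bearing step in your plan, claim (a) that $w(C_i(\A)) \ge w(\A)$ for every downset $\A$, is exactly what the paper does \emph{not} attempt to prove, and your sketch does not establish it. The paper instead switches from width to $m(\A)$, the number of maximal elements: since every antichain is the set of maximal elements of the downset it generates, Theorem \ref{T:maxwidth} follows from $m(\D) \le m(\mathcal{J})$ for every downset $\D$, where $\mathcal{J}$ is the level-colex initial segment of size $|\D|$ (note $\mathcal{J} \subseteq \I$ and the maximal elements of $\mathcal{J}$ form an antichain in $\I$). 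For $m$ the compression inequality $m(C_i(\A)) \ge m(\A)$ has a clean inductive proof: the maximal elements of $\A$ are those of $\Aip$ together with those of $\Aim$ not lying in $\Aip$; the induction hypothesis handles the two section counts, and the structural fact that the maximal elements of a level-colex initial segment form a \emph{final} segment of it (because the shadow of a colex initial segment is again a colex initial segment) yields the dichotomy that controls the cross term. Nothing in your proposal plays this role for width; the authors explicitly chose the maximal-element route because pushing general antichains through compressions is the problematic part.

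Concretely, your step (a) is left precisely at its hardest point. After compressing, you must exhibit antichains $\mathcal{G}' \subseteq C_i(\A)_{i-}$ and $\mathcal{F}' \subseteq C_i(\A)_{i+}$ with $|\mathcal{G}'| + |\mathcal{F}'| \ge |\mathcal{G}| + |\mathcal{F}|$ and no containment $G' \subseteq F'$. Your proposed mechanism---replace $\mathcal{F}_{i-}$ and $\mathcal{F}_{i+}$ by level-colex initial segments of the same size---does not even produce antichains (an initial segment of level-colex contains $\emptyset$ and all small sets, so it is never an antichain once it has more than one element), and the appeal to Kruskal--Katona ``together with the level structure'' does not address the real difficulty: one needs, for a suitable choice of $\mathcal{F}'$, a lower bound on the width of $C_i(\A)_{i-}$ with the downset generated by $\mathcal{F}'$ removed---a difference of downsets---matched against the size of $\mathcal{F}'$, and Kruskal--Katona alone gives no such bound. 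Indeed it is not even clear from your argument that the monotonicity $w(C_i(\A)) \ge w(\A)$ holds at all; it is a strictly stronger local statement than the theorem and is nowhere proved in the paper. Your part (b), the direct width computation for the two sporadic families, is fine in outline, but without (a), or without the paper's reduction to counting maximal elements, the proof does not go through.
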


It turns out to be easier to deal with maximal elements instead of general antichains. So, for 
a downset $\A$, let us write $m(\A)$ for the number of maximal elements of $\A$. Because 
any antichain (in some downset) is the set of maximal elements of the downset it generates, 
Theorem \ref{T:maxwidth} will follow immediately from the following.

\begin{theorem}\label{T:maxmax}
   Let $\A$ be a downset in $\Bn$, and let $\I$ be the set of the first $|\A|$ elements in the 
   level-colex order on $\Bn$.  Then $m(\A) \leq m(\I)$.   
\end{theorem}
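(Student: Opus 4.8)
The plan is to prove Theorem \ref{T:maxmax} by the standard compression machinery that has already been set up: show that $i$-compression does not decrease the number of maximal elements of a downset, and then use Lemma \ref{L:compression} to identify the fully compressed downset with (essentially) the level-colex initial segment. Concretely, starting from an arbitrary downset $\A$, we repeatedly apply $i$-compressions for $i = 1, \dots, n$; since each $C_i$ preserves the size of a downset and keeps it a downset, and since one can check (via a suitable potential function, e.g. summing $\sum_{A \in \A} \mathrm{rank}_{\text{level-colex}}(A)$, which strictly decreases unless $\A$ is already $i$-compressed for all $i$) that this process terminates, we arrive at a downset $\A^*$ with $|\A^*| = |\A|$ that is $i$-compressed for all $i$. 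The crux is then the inequality $m(C_i(\A)) \geq m(\A)$.

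To prove $m(C_i(\A)) \geq m(\A)$, I would work section-by-section. Write $\A_{i-}, \A_{i+}$ for the $i$-sections; since $\A$ is a downset, $\A_{i+} \subseteq \A_{i-}$, and both are downsets in $\Po([n] - \{i\})$. A set $A \not\ni i$ is maximal in $\A$ iff $A \in \A_{i-}$, $A$ is maximal in the downset $\A_{i-} \cup \A_{i+} = \A_{i-}$... more precisely: $A$ (with $i \notin A$) is maximal in $\A$ iff $A$ is maximal in $\A_{i-}$ \emph{and} $A \notin \A_{i+}$; and $A \cup \{i\}$ is maximal in $\A$ iff $A$ is maximal in $\A_{i+}$. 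So $m(\A) = m(\A_{i+}) + \#\{A : A \text{ maximal in } \A_{i-},\ A \notin \A_{i+}\}$. Since $C_i(\A)$ replaces each section by the level-colex initial segment of the same size, we have $m(C_i(\A)_{i+}) = m(\I_{i+})$ and $m(C_i(\A)_{i-}) = m(\I_{i-})$ where these are initial segments; the point is that an initial segment of level-colex has at most as many maximal elements as any other downset of the same size restricted appropriately — but actually what is needed is the combined count. I would argue that passing to initial segments can only help: the number of maximal elements of the larger section ($\A_{i-}$) that avoid the smaller section ($\A_{i+}$) is controlled because both become nested initial segments, so a maximal element of the $i-$ initial segment that is not in the $i+$ initial segment must lie in the "gap", and the count of such is exactly (number of maximal elements of $\I_{i-}$) minus (those that are also in $\I_{i+}$), and one shows this is at least the corresponding quantity for $\A$. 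The key monotonicity fact I will need is: \emph{for a downset $\D$ in a Boolean lattice and the level-colex initial segment $\I$ of the same size, $m(\I) \le m(\D)$ restricted to single sections} — wait, that's the wrong direction, so instead I need the reverse for the combined expression. I expect this bookkeeping to be the main obstacle, and I would handle it by carefully splitting maximal elements of $\A$ into those "killed" by being in $\A_{i+}$ versus not, and showing the compression never loses a maximal element net.

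Once $m(C_i(\A)) \geq m(\A)$ is established, iterate to get $m(\A^*) \geq m(\A)$ with $\A^*$ fully $i$-compressed. By Lemma \ref{L:compression}, either $\A^* = \I$ (the level-colex initial segment of size $|\A|$), in which case we are done, or $\A^*$ is one of the two sporadic examples. In those sporadic cases, $\A^*$ differs from $\I$ by removing one set $B = A^c$ and adding its successor $A$; I would simply check directly that $m(\A^*) \leq m(\I)$ for these specific downsets — this is a finite, explicit computation comparing the maximal elements of $[n]^{(\leq r)} - \{B\} \cup \{A\}$ against those of $[n]^{(\leq r)} \cup \{A\}$ (and similarly in the even case), where one sees that swapping $B$ for $A$ cannot create more maximal elements than are present in the genuine initial segment. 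Combining, $m(\A) \leq m(\A^*) \leq m(\I)$, which is Theorem \ref{T:maxmax}; Theorem \ref{T:maxwidth} then follows as noted since every antichain is the maximal set of the downset it generates, and $w(\A) = \max_{\A' \subseteq \A \text{ downset}} m(\A') \le m(\A'') \le m(\I) = w(\I)$ after passing to the generated downset $\A''$ of an optimal antichain (noting $|\A''| \le |\A|$, so its level-colex initial segment is contained in $\I$, giving $m(\I_{|\A''|}) \le m(\I)$ since level-colex initial segments have nondecreasing $m$).
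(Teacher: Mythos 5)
Your overall strategy (compress, show termination, invoke Lemma \ref{L:compression}, check the two sporadic cases) is the paper's, but the heart of the proof --- the inequality $m(C_i(\A)) \ge m(\A)$ --- is left unproved, and your sketch shows you have not identified what makes it work. Two ingredients are missing. First, the argument must be an induction on $n$: the sectionwise fact you need, namely that replacing $\Aip$ (resp.\ $\Aim$) by the level-colex initial segment of the same size in $\Po([n]-\{i\})$ does not decrease the number of maximal elements, is precisely Theorem \ref{T:maxmax} in dimension $n-1$, so it comes from the induction hypothesis and yields $m(C_i(\A)_{i+}) \ge m(\Aip)$ and $m(C_i(\A)_{i-}) \ge m(\Aim)$. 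You never set up this induction; indeed at one point you state the inequality in the wrong direction (``an initial segment \dots has at most as many maximal elements''), and the ``key monotonicity fact'' you write down is also reversed --- you notice the problem but never resolve it. Second, the sectionwise inequalities alone do not give $m(C_i(\A)) \ge m(\A)$, because of the cross term in your (correct) decomposition: the number of maximal elements of the $i-$ section lying outside the $i+$ section. The paper controls this with a structural fact you never invoke: the maximal elements of a level-colex initial segment form a final segment of it, because the lower shadow of a colex initial segment is again a colex initial segment. Writing $\B = C_i(\A)$, since $\Bip$ and $\Bim$ are nested initial segments, either every element of $\Bim - \Bip$ is maximal in $\Bim$ (so the cross term equals $|\Bim|-|\Bip| = |\Aim|-|\Aip|$, which bounds the corresponding count for $\A$), or every maximal element of $\Bim$ avoids $\Bip$ (so the cross term is $m(\Bim) \ge m(\Aim)$, again sufficient). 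Your plan to ``handle it by carefully splitting \dots and showing the compression never loses a maximal element net'' restates the goal rather than proving it, so the central step of the theorem is a genuine gap.

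Secondary points: the termination argument and the direct check of the two exceptional families from Lemma \ref{L:compression} are fine in outline and match the paper. However, your closing deduction of Theorem \ref{T:maxwidth} rests on two false claims: $m$ of level-colex initial segments is not nondecreasing in the size (for $n=2$ the initial segments of sizes $3$ and $4$ have $m=2$ and $m=1$), and $m(\I) = w(\I)$ fails in general --- the paper explicitly notes that the maximal elements of an initial segment need not form a maximum antichain. The correct deduction is $w(\A) = m(\A'') \le m(\I_{|\A''|}) \le w(\I_{|\A''|}) \le w(\I)$, using only that maximal elements form an antichain and that initial segments are nested.
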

\begin{proof}
We proceed by induction on $n$. As the result is trivial for $n=1$, we turn to the induction step.
We first wish to show that for any $\A \in \Bn$, and any $1 \leq i \leq n$, we have $m(\A) \leq
m(C_i(\A))$, in other words that an $i$-compression does not decrease the number of maximal elements.

For convenience, write $\B$ for $C_i(\A)$. Now, the maximal elements of $\A$ consist of the 
maximal elements of $\Aip$ together with those maximal elements of $\Aim$ that do not belong to
$\Aip$. (Recall that $\Aip$ and $\Aim$ are subsets of $\B(n-1)$.)
And similarly for $\B$.

By the induction hypothesis, we have $m(\Bip) \geq m(\Aip)$ and $m(\Bim) \geq m(\Aim)$. 
Also, the maximal elements of an initial segment of the simplicial ordering form a final segment 
of that initial segment -- this is because the lower shadow of a colex initial segment is again a
colex initial segment. It follows that, if we consider the two initial segments $\Bip$ and $\Bim$, 
we must have that either every element of $\Bim - \Bip$ is a maximal element of $\Bim$, or 
every maximal element of $\Bim$ misses $\Bip$. In either case, we see that the set of maximal 
elements of $\Bim$ that do not belong to $\Bip$ is at least as large as the set of maximal elements 
of $\Aim$ that do not belong to $\Aip$. This establishes our claim.

Define a sequence of set systems $\A_0,\A_1,\ldots$ as follows. Set $\A_0=\A$. Having defined 
$\A_0,\ldots,\A_k$, if $\A_k$ is $i$-compressed for all $i$ then stop the sequence with $\A_k$. 
Otherwise, there is an $i$ for which $\A_k$ is not $i$-compressed. Set $\A_{k+1} = \C_i(\A_k)$, and continue 
inductively.

This sequence has to end in some $\A_l$, because, loosely speaking, if an operator $C_i$ moves 
a set then it moves it to a set which is earlier in the level-colex order on $\Bn$. 
The set system $\A'=\A_l$ satisfies $|\A'| = |\A|$ and $m(\A') \geq m(\A)$, and is $i$-compressed
for every $i$. It follows by Lemma \ref{L:compression}  that either $\A'$ is an initial segment
of the level-colex  order on $\Bn$, or else $n$ is odd (say $n=2r+1$) and 
$$\A = [n]^{(\leq r)} - \{\{r+2, r+3, \ldots, n\}\} \cup \{\{1, 2, \ldots, r+1\}\},$$
or  else $n$ is even (say $n=2r$) and                                                                                                                           
$$\A=[n]^{(<r)} \cup \{A \in [n]^{(r)}:\; n \notin A\} - \{\{r, r+1, \ldots, n-1\}\} \cup \{\{1,2,\ldots,r-2,r-1,n\}\}.$$
Thus, to complete the proof, it remains only to observe that in the
latter two cases we have $m(\A') \leq m(\I)$.
\end{proof}

%%%%%%%%%%%%%%%%%%%%
\section{The Minimum Width of a Downset}\label{S:binary}
%%%%%%%%%%%%%%%%%%%%

In this section, let $<_b$ denote the {\it binary order} on $\Bn$: as noted above,
for $A, B \subseteq [n]$, $A <_b B$ if $\max A \Delta B \in B$.   As above, we tend to refer to the
restriction of the binary order to a level as the colex order on that level. We refer to a downset of $\Bn$ 
that is an initial segment of the binary order as a {\it binary} downset.

Recall that $\Bn$ has a {\it symmetric chain decomposition} (SCD), that is, a partition into skipless chains
whose minimum $A$ and maximum $B$ satisfy $|A| + |B| = n$.   There are many constructions of SCDs
of $\Bn$ -- we use one due to Greene and Kleitman \cite{GK} that we now outline.  It is useful to regard
members of $\Bn$ as both subsets of $[n]$ and binary sequences indexed by $[n]$.  

Given a binary $n$-sequence, scan from left to right.  When a 0 is scanned, it is temporarily
unpaired.  When a 1 is scanned, it is paired to the rightmost unpaired 0 and both are now paired, or else 
there are none and the 1 is unpaired.  Given a set $A$, we move up its chain in the Greene-Kleitman
SCD by successively replacing unpaired 0's by 1's, from left to right.  We move down the chain 
by replacing unpaired 1's with 0's, right to left.  Here is an example of the procedure.  Begin
with the set $A = \{1, 2, 6, 8, 9\}$ in $\mathcal{B}(10)$.  The pairing procedure results in
$$1 1 0 \underbracket{0 \overbracket{0 1} \overbracket{0 1} 1} 0$$
with unpaired 1's in positions 1 and 2, and unpaired 0's in positions 3 and 10.  Here are the predecessors and
the successors of $A$ in its chain with altered entries underlined:
\begin{align*}
A:\  & 1 1 0 0 0 1 0 1 1 0                    &   1 1 1 0 0 1 0 1 1 \underline{1}\\
& 1 \underline{0} 0 0 0 1 0 1 1 0      &    1 1 \underline{1} 0 0 1 0 1 1 0\\
& \underline{0} 0 0 0 0 1 0 1 1 0      &     A:\  1 1 0 0 0 1 0 1 1 0
\end{align*}
We refer to the minimum elements of the symmetric chains in the Greene-Kleitman SCD $\Ch$  as {\it special points}.  
These are exactly the subsets of $[n]$ with no unpaired 1's.  Thus, from above, $B = \{6, 8, 9\}$ is a special 
point in $\mathcal{B}(10)$.

Given a binary downset $\D$ of $\Bn$, let $s(\D)$ denote the number 
of special points in $\D$.  Since $\D$ is a downset,  $s(\D)$ is the number of symmetric chains in  
$\Ch$ that intersect $\D$.\\

\begin{lemma}\label{L:special}
For every binary downset $\D$ we have $w(\D) = s(\D).$
\end{lemma}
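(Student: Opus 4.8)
The plan is to prove the two inequalities $w(\D) \le s(\D)$ and $w(\D) \ge s(\D)$ separately, with the first being essentially free from Dilworth and the second being the substantive direction.

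For the upper bound $w(\D) \le s(\D)$: since $\Ch$ is a chain decomposition of $\Bn$, the chains of $\Ch$ that meet $\D$ restrict to a chain decomposition of $\D$ (each such restriction is a nonempty chain because $\D$ is a downset, so intersecting a symmetric chain in a suffix-closed-from-below manner still gives a chain). Hence $\D$ is covered by $s(\D)$ chains, so any antichain has at most $s(\D)$ elements, i.e. $w(\D) \le s(\D)$.

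For the lower bound $w(\D) \ge s(\D)$, the idea is to exhibit an explicit antichain of size $s(\D)$ inside $\D$, namely the set $M$ of \emph{maximal elements} of the chains $\D \cap C$ as $C$ ranges over the symmetric chains meeting $\D$. This has exactly $s(\D)$ elements (one per chain, and they are distinct since the chains are disjoint), and it lies in $\D$; the content is to show $M$ is an antichain. Suppose not: there are distinct chains $C, C'$ meeting $\D$ with tops $X \in \D \cap C$ and $Y \in \D \cap C'$ and $X \subsetneq Y$. The key structural fact to exploit is that $\D$ is a \emph{binary} downset, i.e. an initial segment of the binary order, combined with the explicit description of the Greene--Kleitman pairing: moving down a chain replaces unpaired $1$'s by $0$'s from the right, and moving up replaces unpaired $0$'s by $1$'s from the left. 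From $X \subsetneq Y$ and the fact that $Y$ is the top of its chain in $\D$ (so $Y$'s successor $Y^+$, obtained by flipping the leftmost unpaired $0$ of $Y$ to a $1$, is not in $\D$, hence $Y <_b Y^+$ places $Y^+$ outside the initial segment), I want to derive that $X$'s successor $X^+$ satisfies $X^+ \le_b Y \le_b$ (last element of $\D$), forcing $X^+ \in \D$ and contradicting maximality of $X$ on its chain — unless $X$ is already the chain-top in $\Bn$, in which case $X$ is a top level set and one argues $Y$ cannot strictly contain it while still sitting below the last element of $\D$. Making this precise is the main obstacle: one must track how adding the element(s) of $Y \setminus X$ to $X$ compares, in the binary (colex-like) order, to walking up $X$'s own chain one step, using that the unpaired positions of $X$ are determined by the pairing and that $X \subsetneq Y$ constrains where the new elements can sit.

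The cleanest route to that contradiction is probably to use the known compatibility of the binary order with the Greene--Kleitman construction: going up a symmetric chain by one step moves a set later in the binary order by the \emph{minimum possible} amount among all single-element additions that stay within, or rather one shows $X^+$ is $\le_b$ every set strictly above $X$ that is reachable by adding elements, in particular $X^+ \le_b Y$. Granting $X^+ \le_b Y$, since $Y \in \D$ and $\D$ is an initial segment we get $X^+ \in \D$, contradicting that $X$ is the top of $\D \cap C$. The only case needing separate care is when $C$ has no element above $X$ at all, i.e. $X$ is the maximum of its symmetric chain, which by the symmetry $|{\min}| + |{\max}| = n$ means $X$ has at least $n/2$ elements and no unpaired $1$'s complement-wise; here if $X \subsetneq Y$ then $Y$ is even higher up, and one shows directly that the last element of $\D$ (which lies on the chain through the unique top set if $\D$ is large, or is handled by the pairing description otherwise) cannot dominate $Y$ while excluding $X^+$-type sets — a short, essentially computational check. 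I would write the general-position argument carefully and dispatch this boundary case in a sentence or two.
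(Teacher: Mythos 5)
The upper bound $w(\D)\le s(\D)$ is fine and is exactly the paper's argument. The lower bound, however, has a genuine gap: the antichain you propose — the set $M$ of maximal elements of $\D\cap \C$ as $\C$ ranges over the Greene--Kleitman chains meeting $\D$ — is simply not an antichain in general. Take $n=3$, where the Greene--Kleitman chains are $\emptyset\subset\{1\}\subset\{1,2\}\subset\{1,2,3\}$, $\{2\}\subset\{2,3\}$, $\{3\}\subset\{1,3\}$, and let $\D$ be the binary downset of size $6$, i.e.\ $\D=\{\emptyset,\{1\},\{2\},\{1,2\},\{3\},\{1,3\}\}$. The chain tops inside $\D$ are $\{1,2\}$, $\{2\}$, $\{1,3\}$, and $\{2\}\subset\{1,2\}$, so $M$ fails to be an antichain. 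The same example kills the ``compatibility'' fact you hoped to lean on: with $X=\{2\}$ and $Y=\{1,2\}\supsetneq X$, the chain successor of $X$ is $X^{+}=\{2,3\}$, and $\{1,2\}<_b\{2,3\}$, so $X^{+}\le_b Y$ is false. Going up a symmetric chain does \emph{not} move you later in the binary order by the least possible amount among supersets, so the contradiction you want (that $X^{+}$ would be forced into the initial segment) cannot be extracted this way; this is not a boundary case but the generic failure mode.

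The paper's proof sidesteps exactly this problem by not using chain tops at all. For each special point $A\in\D$ (the bottom of its chain $\C$), it lets $\phi(A)$ be a special point of $\D$ of maximum cardinality with $A\le_b\phi(A)$, and proves, by counting paired and unpaired zeros in the interval $[1,t]$ where $t=\max A\,\Delta\,\phi(A)$, that the chain $\C$ contains an element $A'\in\D$ with $|A'|=|\phi(A)|$ and $A'<_b\phi(A)$ (this is statement (A)); one then checks (statement (B)) that these $A'$, one per chain meeting $\D$, form an antichain, using binary-order comparisons of the associated special points. In the $n=3$ example this selects $\{1\},\{2\},\{3\}$ rather than the chain tops. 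If you want to salvage your write-up, you would need to replace the chain-top construction with a level-selection of this kind; as it stands, both the exhibited set and the key inequality driving your contradiction are false.
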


\begin{proof}

First observe that $w(\D) \le s(\D)$ because $s(\D)$ is the number of $\C \in \Ch$ which intersect 
$\D$ and $$\{ \C \cap \D \ | \ \C \in \Ch, \ \C \cap \D \ne \emptyset \}$$
is a partition of $\D$ by chains.

To prove the reverse inequality, we first verify the following statement.

{\bf (A).}  For each special point $A \in \D$, say the minimum element of $\C \in \Ch$, let 
$\phi(A)$ be a special point of maximum cardinality in $\D$ such that $A \le_b \phi(A)$.
Then there exists $A' \in \C \cap \D$ such that $|A'| =  |\phi(A)|$ and
$A \le_b A' \le_b \phi(A)$.  See Figure \ref{figure:GK}.

Since $A \le_b A$, $|A| \le |\phi(A)|$. Let $B = \phi(A)$ and $|B| - |A| = r$.   For $r = 0$ then $A = A'$ 
verifies {\bf(A)}; thus we assume $r > 0$.   Then $A <_b B$, which implies that 
$t = \max A \Delta B \in B$.  Representing subsets of $[n]$ as binary sequences, we see that $B$ 
has $r$ more 1's than 0's than $A$ in positions in the interval $[1, t]$.  

%%%%%
\begin{figure}[h]
\begin{center}
\hspace{-1cm}
\scalebox{1.1}{\includegraphics{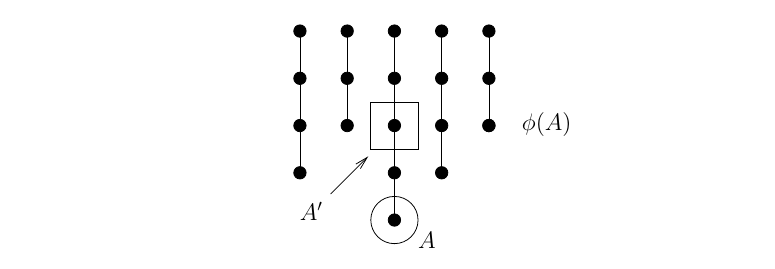}}
\caption{Chains and special points in the proof of $\mathbf{(A)}$.}
\label{figure:GK}
\end{center}
\end{figure}
%%%%%

In the Greene-Kleitman pairing, for a subset of $[n]$,  each 1 is paired with the rightmost unpaired 0 
to its left during the left-to-right pairing process.  Since $A$ and $B$ are minimal members of chains 
in $\Ch$, all 1's in $A$ and $B$ are paired.  Suppose $A$ has $s$ 1's in the interval $[1, t-1]$.  Then, 
according to $B$, 
$$t - 1 = 2(s + r - 1) + 1 + v,$$
where $v$ is the number of unpaired 0's in $B$ in $[1, t - 1]$ and the summand 1 counts the 0 with 
which the 1 in position $t$ of $B$ is paired.  According to $A$,
$$t - 1 = 2s + w,$$
where $w$ is the number of unpaired 0's in $A$ in $[1, t - 1]$.   Since $r \ge 1$ and $v \ge 0$, 
$$2s + w = 2r + 2s + v - 1 \ge 2s + r + v,$$
which implies that $w \ge r$.  We obtain $A' \in \C$ from $A$ by switching $r$ unpaired 0's to 1's, 
from left to right, in the interval $[1, t-1]$ in $A$. Thus $\max A' \Delta B = t$, which means that  
$A' <_b B$ and, therefore, $A' \in \D.$ This completes the proof of {\bf (A).}\\

We now claim that the set of all these $A'$ provide the antichain required.

{\bf (B).} The set $\mathcal{W} = \{ A' : A \in \D \ \text{and is a special point} \}$  is an antichain in $\Bn$.

Since $|\mathcal{W}| = s(\D)$, verifying {\bf (B)} proves the lemma.

Let $A$ and $B$ be special points with $A <_b B$ in $\D$.  Suppose that $A' \subset B'$.  
Then $|A' | < |B'|$ and $|B'| = |C|$ for some special point $C$ in $\D$ such that $B \le_b C$.  
Then $A <_b C$, so $|A'| \ge |C|$ (by the maximality of $|\phi(A)|$), a contradiction.  Now suppose $B' \subset A'$.  By {\bf (A)}, 
there is a special point $D \in \D$ such that $|A'| = |D|$ and $A' \le_b D$.  Since $|D| > |B'|$, we know 
that $D <_b B$, so $A' \le_b D <_b B \le_b B'$, which contradicts $B' \subset A'$.  This proves {\bf (B)} 
and completes the proof of the lemma.
\end{proof}

If we think of building the binary downsets in $\Bn$ sequentially by listing the subsets of $[n]$ in the binary 
order, then the preceding argument shows at which steps the width of the downsets increase.
For all $A \in \Pn$ let $[\emptyset, A) = \{ B \in \Pn : B <_b A \}$ and 
$[\emptyset, A] = [\emptyset, A) \cup \{A\}$.  Of course,
both  $[\emptyset, A)$ and  $[\emptyset, A]$ are downsets in $\Bn$.\\

\begin{proposition}\label{P:special}
For all $A \subseteq [n]$ we have $w([\emptyset, A]) = w([\emptyset, A)) + 1$ if and only if $A$ is a 
special point.
\end{proposition}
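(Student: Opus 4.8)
The plan is to leverage Lemma \ref{L:special}, which tells us that $w([\emptyset, A)) = s([\emptyset, A))$ and $w([\emptyset, A]) = s([\emptyset, A])$, so the width increases by exactly $1$ when passing from $[\emptyset, A)$ to $[\emptyset, A]$ if and only if the number of special points increases by $1$ — that is, if and only if $A$ itself is a special point. This observation reduces the proposition to the following two elementary facts: (i) adding the single set $A$ to a binary downset increases the special-point count by $1$ precisely when $A$ is a special point, and (ii) both $[\emptyset, A)$ and $[\emptyset, A]$ are indeed binary downsets, so that Lemma \ref{L:special} applies to each.

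First I would record that $[\emptyset, A)$ and $[\emptyset, A]$ are downsets in $\Bn$: this is immediate from the fact that initial segments of the binary order are downsets (a binary downset is precisely an initial segment of $<_b$, and if $B <_b A$ and $C \subseteq B$ then $C <_b B \le_b A$, using that $\max(C \triangle B) \in B$ and $C \subsetneq B$ forces $C <_b B$). Next, I would observe that the only set in $[\emptyset, A] \setminus [\emptyset, A)$ is $A$ itself, so $s([\emptyset, A]) = s([\emptyset, A)) + 1$ if $A$ is a special point, and $s([\emptyset, A]) = s([\emptyset, A))$ otherwise — since a special point of a set system is just a member of the system that happens to be a special point of $\Bn$ (a subset with no unpaired $1$'s in the Greene-Kleitman pairing). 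Combining this with Lemma \ref{L:special} applied to both downsets gives the equivalence directly.

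I do not anticipate a genuine obstacle here; the content has essentially all been extracted into Lemma \ref{L:special}, and what remains is bookkeeping. The one point requiring a line of care is confirming that the notion of ``special point of $\D$'' used implicitly in $s(\D)$ is simply ``element of $\D$ that is a special point of $\Bn$,'' so that inserting $A$ changes $s$ by $1$ exactly when $A$ is such a point — this is how $s(\D)$ was defined just before Lemma \ref{L:special}. With that in hand, the proof is a two-line deduction: $w([\emptyset,A]) - w([\emptyset,A)) = s([\emptyset,A]) - s([\emptyset,A)) = [\,A \text{ is a special point}\,]$.
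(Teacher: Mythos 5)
Your proof is correct and follows essentially the same route as the paper: both reduce the statement to Lemma \ref{L:special} (width of a binary downset equals its number of special points), so that adding $A$ raises the width by one exactly when $A$ is a special point. Your extra check that $[\emptyset,A)$ and $[\emptyset,A]$ are binary downsets is a detail the paper asserts just before the proposition, so nothing of substance differs.
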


\begin{proof}
By Lemma \ref{L:special} the width of a binary downset in $\Bn$,  is the number of chains in $\Ch$ that intersect 
the downset, or the number of special points in the downset. Thus, if we list the subsets of $[n]$ according to the 
binary order $<_b$, the width of the downsets (so enumerated) in $\Bn$ increases exactly when a special point is 
added.
\end{proof}

Conjecture \ref{c:convex}, specialized to downsets $\D$, is that  $w(\D)/|\D|$ is minimized for $\D = \Bn$.   
This is true for binary downsets, as we now show.

\begin{theorem}\label{T:alpha}
For all nonempty binary downsets $\D$ of $\Bn$ we have 
$$w(\D)/|\D| \ge w(\Bn)/|\Bn|.$$
\end{theorem}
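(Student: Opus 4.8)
The plan is to exploit the symmetric chain decomposition $\Ch$ together with the characterization of $w(\D)$ from Lemma~\ref{L:special}. Recall $w(\Bn)/|\Bn| = \binom{n}{\lfloor n/2\rfloor}/2^n$, and $w(\Bn)$ equals the number of symmetric chains in $\Ch$, i.e.\ the number of special points in all of $\Bn$. By Lemma~\ref{L:special}, for a binary downset $\D$ we have $w(\D) = s(\D)$, the number of symmetric chains of $\Ch$ meeting $\D$. So the inequality $w(\D)/|\D| \ge w(\Bn)/|\Bn|$ becomes
\[
\frac{s(\D)}{|\D|} \ \ge \ \frac{s(\Bn)}{2^n}.
\]
Since $\D$ is the union of its intersections $\C\cap\D$ over the $s(\D)$ chains it meets, a natural strategy is to show that, on average, each such chain-segment $\C\cap\D$ is no longer than the average chain length $2^n/s(\Bn)$ in the whole cube. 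Equivalently, I want to argue that the binary order, when it first enters a chain $\C$ of $\Ch$ (at its special point, by Proposition~\ref{P:special}), tends to take relatively few elements of $\C$ before moving on — certainly it never takes more of $\C$ than it would if $\D$ were all of $\Bn$, but that alone is not enough; I need the average to be controlled.

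First I would make the averaging precise. Order the subsets of $[n]$ by $<_b$ and build $\D$ one element at a time; by Proposition~\ref{P:special}, $s(\D)$ increments exactly when a special point is added. Let $A_1 <_b A_2 <_b \cdots$ be the special points in binary order, with $A_j$ the minimum of chain $\C_j$ of length $\ell_j$ (so $\sum_j \ell_j = 2^n$ and $s(\Bn)$ is the number of chains). The key claim to establish is:

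\textbf{Claim.} For every $k$, among the first $\sum_{j\le k}\ell_j$ subsets of $[n]$ in binary order — equivalently, the binary downset whose last special point is $A_k$ and which contains all of $\C_1,\dots,\C_k$ — the chains $\C_1,\dots,\C_k$ are exactly the chains met, so this downset has width $k$ and size $\sum_{j\le k}\ell_j$; and moreover the binary order visits $\C_1,\dots,\C_k$ consecutively in this fashion, i.e.\ the binary order enumerates the cube chain-by-chain in the order $\C_1,\C_2,\dots$.

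If the Claim holds, the theorem is immediate: every binary downset $\D$ is an initial segment of the binary order, hence (using that the enumeration proceeds chain-by-chain) for some $k$ it consists of $\C_1,\dots,\C_{k-1}$ in full plus a proper initial segment of $\C_k$; then $w(\D)=k$ and $|\D| \le \sum_{j\le k}\ell_j$, while $w(\Bn)$ and $|\Bn|=2^n$ correspond to taking all chains. A short computation — partial-sums (mediant) inequality, using that the ratios $k/\sum_{j\le k}\ell_j$ are decreasing because later chains are longer, which is exactly the content that the chains near the top of the cube are the long ones through the middle — then gives $w(\D)/|\D| \ge s(\Bn)/2^n$.

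\emph{Where the difficulty lies.} The crux is the Claim that the binary order enumerates $\Bn$ one whole symmetric chain at a time, in increasing order of special points. This is a structural compatibility between the Greene–Kleitman SCD and the binary order, and it is essentially what the machinery of Lemma~\ref{L:special} and Proposition~\ref{P:special} was built to extract: $\mathbf{(A)}$ in the proof of Lemma~\ref{L:special} shows that once the binary order reaches the special point $A_j$ it can (and the downset does) march all the way up $\C_j$ before a later special point forces a new chain. I would verify the Claim by showing that if $A$ is the special point of $\C$ then every element of $\C$ precedes, in $<_b$, the next special point after $A$; this follows from the cardinality/pairing bookkeeping already carried out in $\mathbf{(A)}$, since moving up $\C$ only switches \emph{unpaired} $0$'s to $1$'s and these sit to the left of the highest symmetric-difference coordinate with any later special point. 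The remaining step — the mediant inequality on the decreasing ratios $k/(\ell_1+\cdots+\ell_k)$ — is routine once one notes that $\ell_1\le\ell_2\le\cdots$ is \emph{not} literally true, but that the weaker averaged statement $\sum_{j\le k}\ell_j \le k\cdot 2^n/s(\Bn)$ does hold because the short chains (far from the middle level) come first in the special-point order; establishing that monotonicity of partial averages cleanly is the one place a little care is needed.
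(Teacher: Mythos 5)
Your central Claim is false, and the argument collapses with it. In the Greene--Kleitman decomposition the chain whose special point is $\emptyset$ is $\emptyset \subset \{1\} \subset \{1,2\} \subset \cdots \subset [n]$; its top element $[n]$ is the very last set in the binary order, so the binary order certainly does not finish this chain before reaching the next special point. Already for $n=2$ the binary order is $\emptyset, \{1\}, \{2\}, \{1,2\}$, which interleaves the chain $\{\emptyset,\{1\},\{1,2\}\}$ with the singleton chain $\{\{2\}\}$: the binary downset of size $3$ meets both chains, containing only part of the first and all of the second, so the claimed structure ``full chains $\C_1,\dots,\C_{k-1}$ plus an initial segment of $\C_k$'' does not occur. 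Worse, if the Claim were true it would refute the theorem rather than prove it: the binary downset of size $n+1$ would be the single chain through $\emptyset$, of width $1$, and $1/(n+1) < \binom{n}{\lfloor n/2\rfloor}/2^n$ for $n\ge 2$ (such a chain is not even a downset, which is another way to see the Claim cannot hold). Statement $\mathbf{(A)}$ in the proof of Lemma~\ref{L:special} does not support the Claim: it only guarantees that the chain through a special point $A$ reaches, inside $\D$, the level of the largest special point of $\D$ lying $\le_b$-above $A$; in general $\D$ contains just a lower portion of each chain it meets, and these portions interleave in the binary order.

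The fallback averaging step fails as well: you would need $\ell_1+\cdots+\ell_k \le k\cdot 2^n/s(\Bn)$, but the first special point in binary order is $\emptyset$, whose chain has length $n+1$, the longest of all, exceeding the average chain length $2^n/\binom{n}{\lfloor n/2\rfloor}$ for every $n\ge 2$ -- the long chains, not the short ones, are entered first, so the partial averages go the wrong way at the start. The paper's proof takes an entirely different route: setting $\alpha(\D)=|\D|/w(\D)$ and using $w=s$ from Lemma~\ref{L:special}, it inducts on the number of $1$'s in the binary representation of $d=|\D|$, partitions the binary downset $\C(d)$ into Boolean intervals, and compares the increments of the special-point count, $s(\C(d+2^{k_s}))-s(\C(d)) \le s(\C(d))-s(\C(d-2^{k_s}))$, to rule out a maximal counterexample $d$ with $\alpha(\C(d))>\alpha(\Bn)$. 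If you want to pursue a chain-based argument, it must be phrased in terms of the actual (interleaved) intersections $\C\cap\D$ rather than whole chains, and some genuinely new input is needed to control their average length.
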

\begin{proof}
For any positive integer $d$, let $d= 2^{k_1} + 2^{k_2} + \ldots + 2^{k_s}$, $k_1 > k_2 > \cdots > k_s$, be the binary
representation of $d$.   Our calculations are easier to display if we use the reciprocal, that is, cardinality over width.
So for any binary downset $\D$ of $\Bn$, set $\alpha(\D) = |\D|/w(\D)$.  We show that  $\alpha(\D) \le \alpha(\Bn)$.

Let us proceed by induction on $s$.  If $s = 1$ then $d = 2^{k_1}$ and $\alpha(\D) = 
\alpha(\mathcal{B} (k_1)) \le \alpha(\Bn)$, which follows from the fact that 
$\alpha(\B(k)) = {|\B(k)|}/{w(\B(k))} = 2^k/{\binom{k}{\lfloor k/2 \rfloor}}$ is 
nondecreasing as a function of $k$.

For $s \ge 2$, the binary downset $\C(d)$ of size $d$ has the following partition into intervals of $\Bn$:
$\C(d) = \bigsqcup_{i = 1}^s \Bi$ where\\[-.8cm]

  \begin{equation}\label{e:decomp}
        \Bi = [\{k_1 + 1, k_2 + 1, \ldots, k_{i-1} + 1\}, [k_i] \cup \{k_1 + 1, k_2 + 1, \ldots, k_{i-1} + 1\} ] \cong \mathcal{B}(k_i).
  \end{equation}

Note that $\C(d - 2^{k_s}) =  \bigsqcup_{i = 1}^{s-1} \Bi$ and that \\[-.8cm]

$$\C(d + 2^{k_s}) = \C(d) \bigsqcup [\{k_1 + 1, k_2 + 1, \ldots, k_{s} + 1\}, [k_s] \cup \{k_1 + 1, k_2 + 1, \ldots, k_{s} + 1\} ].$$

Observe that $\C(d + 2^{k_s}) - \C(d)$ is an interval isomorphic to $\mathcal{B}_s$ via the map $X \mapsto X - \{k_s + 1\}$. 

In the Greene-Kleitman SCD $\mathbf{C}$ of $\Bn$, the minimum elements of the members of $\mathbf{C}$, the
special points,  are exactly those sets with no unpaired 1's in the pairing scheme.  This implies that if $X$ is a special point
then every $Y \subseteq X$ is also a special point.  Thus, if $X \in \C(d + 2^{k_s}) - \C(d)$ is special then so is 
$X - \{k_s + 1\} \in \mathcal{B}_s$.  Therefore,\\[-.8cm] 

$$s(\C(d + 2^{k_s})) - s(\C(d)) \le s(\C(d)) - s(\C(d - 2^{k_s})),$$

from which it follows that 
\begin{equation}\label{e:2}
\frac{s(\C(d + 2^{k_s})) + s(\C(d - 2^{k_s}))}{s(\C(d))} \le 2.
\end{equation}

By Lemma \ref{L:special}, $\alpha(\D) = |\D|/s(\D)$.  If\\[-.8cm]

\begin{equation}\label{e:2-in}
\alpha(\C(d)) \ge \alpha(\C(d + 2^{k_s})) \ \ \text{and} \ \ \alpha(\C(d)) \ge \alpha(\C(d - 2^{k_s}))
\end{equation}

and at least one inequality is strict then, using the fact that $|\C(d \pm 2^{k_s})| =  d \pm 2^{k_s}$, we obtain the inequalities
$$\frac{s(\C(d + 2^{k_s}))}{ s(\C(d))} \ge 1 + \frac{2^{k_s}}{d} \quad \text{and} \quad \frac{s(\C(d - 2^{k_s}))}{ s(\C(d))} \ge 1 - \frac{2^{k_s}}{d}$$
with at least one inequality strict.  Add these inequalities, one strict, to contradict (\ref{e:2}).
We now negate (\ref{e:2-in}) with one inequality made
strict and find that at least one of the following holds:\\[-.8cm]

\begin{align}
\alpha(\C(d)) &< \alpha(\C(d - 2^{k_s})),\label{e:one} \\
\alpha(\C(d)) &< \alpha(\C(d + 2^{k_s})), \ \text{or} \label{e:two}\\
\alpha(\C(d)) &= \alpha(\C(d - 2^{k_s})) = \alpha(\C(d + 2^{k_s})).\label{e:three}
\end{align}

To complete the proof, suppose that $d$ is maximum among positive integers less than $2^n$ with $s$ 1's in their
binary representations such that   $\alpha(\C(d)) > \alpha(\Bn)$.  If (\ref{e:one}) or (\ref{e:three}) hold then the induction
hypothesis for integers with $s-1$ 1's in their binary expansion is contradicted.  So assume (\ref{e:two}) holds.
Of course, $d < d +  2^{k_s} \le 2^n$.  If $k_{s-1} = k_s + 1$ then $d +  2^{k_s}$ has at most $s-1$ 1's in its binary
representation, so  $\alpha(\C(d)) > \alpha(\Bn)$ and (\ref{e:two}) contradict the induction hypothesis.  If $k_{s-1} > k_s + 1$
then $d + 2^{k_s}$ has $s$ 1's and we invoke the maximality of $d$:
$$\alpha(\C(d +  2^{k_s})) \le \alpha(\Bn) < \alpha(\C(d)),$$ 

contradicting (\ref{e:two}).  Thus, there is no such $d$, completing the proof by induction.
\end{proof}

As noted, this establishes Conjecture \ref{c:convex} for the collection of binary downsets.   The conjecture for all
downsets would follow from this.

  \begin{conjecture}\label{c:binary-min-width}  
     Among all $d$-element downsets in $\Bn$, the binary $d$-element downset $\C(d)$ has minimum width.     
  \end{conjecture}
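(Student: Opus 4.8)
\textbf{Proof proposal for Conjecture \ref{c:binary-min-width}.}

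The plan is to proceed by induction on $n$, using codimension-$1$ compressions exactly as in the proof of Theorem \ref{T:maxmax}, but now tracking \emph{width} (equivalently, by Dilworth, the minimum number of chains in a chain partition) rather than the number of maximal elements, and showing that the relevant compression \emph{cannot increase} the width. The natural compression to use here is the one that pushes each $i$-section down to an initial segment of the \emph{binary} order on $\Po([n]-\{i\})$; call it $C_i^b$. As in Section \ref{S:peculiar}, a downset stays a downset under $C_i^b$ (two initial segments of a total order are nested), and $|C_i^b(\A)| = |\A|$. So the crux is:

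\emph{Claim.} For any downset $\A \subseteq \Bn$ and any $i$, $w(C_i^b(\A)) \leq w(\A)$; and a downset that is $C_i^b$-fixed for every $i$ is (up to the bounded exceptional configurations, as in Lemma \ref{L:compression}) an initial segment of the binary order, so its width is $w(\C(|\A|))$ by Lemma \ref{L:special}.

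For the Claim, write $\B = C_i^b(\A)$ with sections $\Bim, \Bip$, which are binary initial segments of $\Po([n]-\{i\})$ with $\Bip \subseteq \Bim$. I would build a chain partition of $\B$ from an optimal chain partition of $\A$ as follows. By the induction hypothesis applied to $\Aip$ we may compress it to the binary initial segment $\Bip$ without increasing its width; similarly for $\Aim$. The subtlety is matching chains across the two sections: a chain partition of $\A$ consists of chains lying in $\Aim$, chains lying in $\Aip$ (shifted up by $i$), and chains that cross, pairing a down-chain in $\Aim$ with an up-chain in $\Aip$ through a set $X$ with $X \in \Aim$ and $X \cup \{i\} \in \Aip$. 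Since we want to re-pair these after compression, I would use the skipless / symmetric-chain structure: take a chain partition of $\Bim$ coming from the Greene--Kleitman SCD restricted to $\Bim$ (valid since $\Bim$ is a binary downset, by Lemma \ref{L:special}), and likewise for $\Bip$; because $\Bip \subseteq \Bim$ and both are binary downsets, the special points of $\Bip$ are a subset of those of $\Bim$ (a subset of a special point is special), so each Greene--Kleitman chain of $\Bip$ extends a Greene--Kleitman chain of $\Bim$, and these can be glued through the $i$-coordinate into chains of $\B$. The total number of chains obtained is $s(\Bim) = w(\Bim) \le w(\Aim) \le w(\A)$, which gives the Claim. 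The exceptional sporadic configurations (the analogue of Lemma \ref{L:compression} for the binary order) must be checked to have width $\geq w(\C(|\A|))$ by a direct, small computation, since they differ from a binary initial segment in a single complemented pair.

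The main obstacle I anticipate is precisely the gluing step: one must verify that the Greene--Kleitman chains of the smaller binary downset $\Bip$ really do sit ``inside'' those of $\Bim$ in a way compatible with the $i$-coordinate gluing, i.e. that the number of crossing chains we can form is exactly $s(\Bip)$ and the number of uncovered down-chains in $\Bim$ is $s(\Bim) - s(\Bip)$, so that the chain count is $s(\Bim)$ and not more. Equivalently, one wants the statement: if $\D' \subseteq \D$ are both binary downsets in $\Bn$, then $\D$ has a chain partition into $s(\D)$ skipless chains extending a chain partition of $\D'$ into $s(\D')$ skipless chains — this is where the skipless-decomposition philosophy of Section \ref{S:SD} does the real work, and where an incautious argument could silently lose the sharp bound. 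A secondary, more bookkeeping-heavy obstacle is nailing down the exact list of $C_i^b$-fixed downsets (the binary analogue of Lemma \ref{L:compression}) and confirming each sporadic case has width at least that of the binary initial segment of the same size; I expect this to be routine but tedious, parallel to the end of the proof of Theorem \ref{T:maxmax}.
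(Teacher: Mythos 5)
First, a point of context: the statement you are proving is not a theorem of the paper. It is stated as Conjecture \ref{c:binary-min-width}, made independently by Goldwasser, and the paper offers no proof; so your argument has to stand entirely on its own, and its central Claim --- that the compression $C_i^b$ pushing both $i$-sections into binary initial segments never increases the width of a downset --- is exactly the open difficulty, not a reduction of it. The argument you give for that Claim is incorrect. The gluing step fails: if $\C$ is a Greene--Kleitman chain of $\Po([n]-\{i\})$, then $\C\cap\Bim$ and $\C\cap\Bip$ are bottom segments of $\C$ (intersection of a chain with a downset), and the top element $T$ of $\C\cap\Bim$ is in general a proper superset of the special point $A=\min\C$; since $i\notin T$ we have $T\not\subseteq A\cup\{i\}$, so concatenating $\C\cap\Bim$ with the lifted copy of $\C\cap\Bip$ is not a chain. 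Worse, the chain count you assert would prove $w(\B)\le s(\Bim)=w(\Bim)$, which is false in general: take $\A=\Bn$ (so $\B=\Bn$ and both sections are all of $\Po([n]-\{i\})$); then $s(\Bim)=\binom{n-1}{\lfloor (n-1)/2\rfloor}<\binom{n}{\lfloor n/2\rfloor}=w(\B)$, and no chain partition of $\B$ can have fewer than $w(\B)$ chains. The correct symmetric-chain-style gluing splits each chain of $\Bim$ that meets $\Bip$ into two chains of $\B$, and the resulting number of chains is not bounded by $w(\Aim)$, nor by $w(\A)$, by anything you have shown.

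The deeper structural problem is that your induction hypothesis only controls the widths of the sections, $w(\Bim)\le w(\Aim)$ and $w(\Bip)\le w(\Aip)$, but the width of a downset is not a function of the widths of its $i$-sections: one has $w(\A)=\max\{|\F|+|\G|\}$ over antichains $\F\subseteq\Aim$ and $\G\subseteq\Aip$ with no $F\in\F$ contained in any $G\in\G$, and it is precisely the behaviour of this cross-constraint under compression that would have to be controlled; your sketch never touches it. Contrast this with Theorem \ref{T:maxmax}, where the tracked quantity (number of maximal elements) does decompose section by section because the maximal elements of an initial segment form a final segment --- no analogous fact is available, or offered, for width under binary compression. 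The part you flag as ``routine but tedious'' really is the routine part: an argument parallel to Lemma \ref{L:compression} shows the only downset compressed in every direction that is not a binary initial segment is $(\Po([n-1])\setminus\{[n-1]\})\cup\{\{n\}\}$, whose width is $\binom{n-1}{\lfloor (n-1)/2\rfloor}+1\ge w(\C(2^{n-1}))$, so it causes no trouble. The unsupported step is the monotonicity of width under $C_i^b$; if it had a proof of the kind you sketch, the conjecture would not have been left open by authors who introduced these compressions and use them successfully for the maximization problem in Section \ref{S:peculiar}. To salvage this approach you would need either a proof that $w(C_i^b(\A))\le w(\A)$ for all downsets $\A$ (which, at a minimum, must engage with the two-section description of antichains above), or a counterexample showing the compression route cannot work as stated.
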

  
We now describe the width of $\C(d)$. Goldwasser \cite{G} has independently made Conjecture 8 and proved
Proposition 9.  As usual, if $s < 0$ then $\binom{k}{s} = 0$.\\ 
  
  \begin{proposition}\label{P:Goldwasser}
     Given a positive integer $d$ with binary representation $d = 2^{k_1} + 2^{k_2} + \cdots + 2^{k_r}$ with $k_1 > k_2 > \cdots > k_r \ge 0$ we have
         $$w(\C(d)) = \sum_{i = 1}^r \binom{k_i}{s_i}$$
     where $s_1 = \lceil k_1 / 2 \rceil$ and $s_i = \min (\lceil k_i / 2 \rceil, s_{i-1} - 1)$, $i = 2, 3, \ldots, r$.
  \end{proposition}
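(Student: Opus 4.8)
The plan is to compute $w(\C(d))$ via Lemma \ref{L:special}, which tells us $w(\C(d)) = s(\C(d))$, the number of special points in $\C(d)$. Using the interval decomposition (\ref{e:decomp}), $\C(d) = \bigsqcup_{i=1}^r \Bi$ with $\Bi \cong \B(k_i)$, it suffices to count special points in each block $\Bi$ and sum. So the main task is: which elements $X$ of the interval $\Bi = [F_{i-1}, [k_i] \cup F_{i-1}]$, where $F_{i-1} = \{k_1+1, k_2+1, \ldots, k_{i-1}+1\}$, are special points of the Greene--Kleitman SCD on $\Bn$? I would first record the observation (already used in the proof of Theorem \ref{T:alpha}) that the set of special points is a downset under inclusion, and more precisely that $X$ is special iff it has no unpaired $1$ in the left-to-right pairing scan. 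The point is that the ``fixed prefix'' $F_{i-1}$ sits among the coordinates $k_1+1 > k_2+1 > \cdots > k_{i-1}+1$, all of which exceed $k_i$, so within a set $X \in \Bi$ the coordinates of $X$ lying in $[k_i]$ are scanned first, and then the coordinates $k_j+1$ (for $j < i$) are scanned afterwards.

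The key combinatorial step is to analyze the pairing on a set of the form $X = Y \cup F_{i-1}$ with $Y \subseteq [k_i]$. Scanning $Y$ first inside $[k_i]$ produces some number $u = u(Y)$ of unpaired $0$'s (and, since we may restrict to $Y$ with all its $1$'s paired — otherwise $X$ already fails to be special — no unpaired $1$'s). Then we scan the coordinates $k_{i-1}+1, k_{i-2}+1, \ldots, k_1+1$ in increasing order; each of these is a $1$ in $X$, and it pairs to the rightmost currently-unpaired $0$, of which there are $u$ available from the scan of $[k_i]$ plus possibly more $0$'s coming from gaps between the $k_j+1$'s. The crucial quantitative claim, which I would prove by a short induction on $i$, is that the number of unpaired $0$'s available when we reach the block $\Bi$ is exactly $s_{i-1}$ (with $s_0 := +\infty$, so the first block is unconstrained), and hence $X = Y \cup F_{i-1}$ is special iff every $1$ in the $F_{i-1}$-part gets paired, i.e. iff $u(Y) \le s_{i-1} - |F_{i-1}|$... — here one has to set up the bookkeeping carefully so that the recursion $s_i = \min(\lceil k_i/2\rceil, s_{i-1}-1)$ falls out. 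Concretely: among $Y \subseteq [k_i]$ with no unpaired $1$, the number of unpaired $0$'s $u(Y)$ ranges over $0, 1, \ldots, k_i$ (of appropriate parity — $u(Y) \equiv k_i \pmod 2$ is not forced; rather $u(Y)$ can be any value from $0$ to $k_i$), and the number of such $Y$ with $u(Y) = u$ is a ballot-type number; summing the counts of special $Y$ (those with $u(Y)$ at most the allowed slack) over the block should give $\binom{k_i}{s_i}$.

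The cleanest way to extract that last count is to note that the special points of $\B(k)$ with at most $m$ unpaired $0$'s are in bijection (via the SCD itself) with the union of the bottom portions of the symmetric chains, and that the number of special points $Z \subseteq [k]$ with $|Z| \le j$ equals $\binom{k}{j}$ when $j \le \lceil k/2\rceil$ — because the special points are exactly the chain-minima, each symmetric chain through level $j$ contributes exactly one, and a standard SCD fact gives that the chain-minima of size $\le j$ biject with $k$-sets of size exactly $j$ (equivalently, the minima sizes are $0, 1, \ldots, \lfloor k/2 \rfloor$ with the right multiplicities). Then, identifying ``number of unpaired $0$'s of $Y$'' with ``$k_i$ minus twice the number of paired $1$'s'', or more simply with the size of the chain-minimum reachable by going down from $Y$, one sees that ``$u(Y) \le s_{i-1}-1$-ish slack'' translates to ``$|{\rm min\ of\ }Y{\rm 's\ chain}| \le s_i$'', yielding $\binom{k_i}{s_i}$ special points in $\Bi$ and the claimed formula after summation.

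The main obstacle I anticipate is the precise bookkeeping of how many free $0$'s are passed from one block to the next — i.e. proving rigorously that the running slack after block $i$ is exactly $s_i = \min(\lceil k_i/2\rceil, s_{i-1}-1)$ rather than something off by one. Two effects compete: the cap $\lceil k_i/2\rceil$ arises because a set in $\B(k_i)$ can supply at most $\lceil k_i/2\rceil$ unpaired $0$'s (when $Y = \emptyset$, all $k_i$ coordinates are $0$, but a special point ``uses'' $0$'s as it rises, and the relevant bound for the next block is the number of $0$'s still unpaired, which is maximized appropriately), while the ``$s_{i-1}-1$'' arises because the single extra $1$ at coordinate $k_i+1$ (entering $F_i$) consumes one of the previously-free $0$'s. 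I would handle this by carrying through the induction a sharp invariant: ``when the scan reaches the first coordinate of block $\Bi$'s fixed prefix, the number of unpaired $0$'s is exactly $\min(k_{i}, s_{i-1})$ for $Y=\emptyset$, and in general $u(Y) + (\text{something})$,'' and verifying the base case ($i=1$, no prefix, pure $\B(k_1)$) and the step by tracking a single new $1$ and a single new block. Once that invariant is nailed down, the rest is the routine chain-minimum count above.
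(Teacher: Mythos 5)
Your overall route --- reduce to counting special points via Lemma \ref{L:special}, split $\C(d)$ into the blocks $\Bi \cong \B(k_i)$ of (\ref{e:decomp}), and count special points block by block using the telescoping fact that the number of Greene--Kleitman chain minima of $\B(k)$ of size at most $j$ is $\binom{k}{j}$ --- is viable, and it is genuinely different from the paper's argument, which instead exhibits the antichain formed by the $s_i$-th levels of the blocks and shows, via an induction bounding the sizes of chain minima, that every Greene--Kleitman chain meeting $\C(d)$ passes through that antichain. However, the one step you explicitly defer (``the precise bookkeeping'') is exactly where your tentative formulas are wrong, so as written there is a genuine gap. The correct criterion is: writing $X = Y \cup K_i$ with $Y \subseteq [k_i]$ and $K_i = \{k_1+1,\ldots,k_{i-1}+1\}$ (your $F_{i-1}$), the set $X$ is special if and only if $Y$ is special in $\B(k_i)$ and $|Y| \le s_{i-1}-1$. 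Indeed, when the scan reaches the $1$ in position $k_j+1$ (for $j<i$), positions $1,\ldots,k_j$ contain $|Y|+(i-1-j)$ ones and $k_j-|Y|-(i-1-j)$ zeros, so (all earlier ones being paired) the number of unpaired zeros at that moment is $k_j-2|Y|-2(i-1-j)$; requiring this to be at least $1$ for every $j<i$ gives $|Y| \le \lceil k_j/2\rceil-(i-j)$, and the minimum of these bounds over $j<i$ equals $s_{i-1}-1$ upon unrolling the recursion $s_i=\min(\lceil k_i/2\rceil,s_{i-1}-1)$. The block-$i$ count is then $\binom{k_i}{\min(\lfloor k_i/2\rfloor,\,s_{i-1}-1)}=\binom{k_i}{s_i}$, using $\binom{k_i}{\lceil k_i/2\rceil}=\binom{k_i}{\lfloor k_i/2\rfloor}$, and summing over $i$ finishes the proof.

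Three specific assertions in your sketch are false and must be replaced by the above. First, speciality imposes a \emph{lower} bound on $u(Y)$ (you need enough free zeros to absorb the $i-1$ ones of $K_i$), not the upper bound ``$u(Y)\le s_{i-1}-|F_{i-1}|$'' you wrote; in terms of $u(Y)=k_i-2|Y|$ the condition reads $u(Y)\ge k_i-2(s_{i-1}-1)$. Second, for the only relevant $Y$ (those with all ones paired) the parity of $u(Y)$ \emph{is} forced: $u(Y)=k_i-2|Y|\equiv k_i \pmod 2$, contrary to your parenthetical claim. Third, the invariant in your final paragraph is also incorrect: for $Y=\emptyset$ the number of unpaired zeros when the scan first reaches $K_i$ is $k_{i-1}$, not $\min(k_i,s_{i-1})$; the zeros in the gaps between consecutive elements of $K_i$ must be counted, which is why the condition involves every earlier $k_j$ rather than a single running slack (it just happens to compress to $|Y|\le s_{i-1}-1$). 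Finally, note that $s_i$ can be negative (e.g.\ $d=2^n-1$), in which case the block contributes nothing, so you need the convention $\binom{k}{m}=0$ for $m<0$. With the corrected claim in place, the remainder of your argument (the count $\sum_{\ell\le j}\bigl(\binom{k}{\ell}-\binom{k}{\ell-1}\bigr)=\binom{k}{j}$ of chain minima) does yield a complete and rather clean proof, arguably more self-contained than the paper's, which gets the upper bound indirectly through the antichain of block levels.
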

  
  \begin{proof}
   We may assume that $n = k_1 + 1$.  Let $K_i = \{k_1 + 1, k_2 + 1, \ldots, k_{i-1} + 1\}$ and use the notation from the proof of 
   Theorem \ref{T:alpha}.  Define $\A$ by
    \begin{equation}\label{e:antichain}    
      \A \ = \ \bigcup_{i = 1}^r \binom{[k_i]}{s_i} \cup K_i  .    
   \end{equation}   
   That is, $\A$ consists of the union of the $s_i^{\mathrm{th}}$ levels  of the Boolean intervals $\Bi$ for all $i = 1, 2, \ldots, r$ for which $s_i \ge 0$ (see \ref{e:decomp})).  Then 
   $\A$ is an antichain of size $\sum_{i = 1}^r \binom{k_i}{s_i}$.   To see that $\A$ realizes $w(\C(d))$, by Lemma \ref{L:special},
   it is enough to prove the following.
   
   {\bf Claim:}  Given a Greene-Kleitman symmetric chain $\C$ of $\Bn$ that has its minimum in $\Bi$, $\C$ intersects level
   $s_i$ of $\Bi$; consequently, $s_i \ge 0$.
   
   Suppose that $\C$ has minimum element $A$.  If $|A| \le s_i + i - 1$ then $|A \cap [k_i]| \le s_i \le \lceil k_i / 2 \rceil$.  Then
   successors of $A$ in $\C$ are obtained by adding elements from $[k_i]$ (that is, switching unmatched 0's to 1's in positions
   1 to $k_i$ in the binary representations of $\C$'s sets) until at least $\lceil k_i / 2 \rceil$ elements from $[k_i]$ are present.  This implies that there is an element of $\C$ 
   that intersects level $s_i$ of $\Bi$.  With $d$ fixed, we proceed by induction on $i$ to prove that: for all $j \ge i$, if
   a Greene-Kleitman chain $\C$ has minimum $A$ in interval $\Bj$ then $|A| \le s_i + i - 1$.
   
   For $i = 1$, every Greene-Kleitman chain $\C$ has minimum $A$ with $|A| \le \lfloor n/2 \rfloor$.  Since $k_1 = n - 1$,
   $\lfloor n/2 \rfloor = \lceil k_1 / 2 \rceil = s_1$.
   
   Let $i  > 1$.   If $s_i = s_{i - 1} - 1$ then $|A| \le s_{i - 1}  + i - 2 = s_i + i - 1$ by the induction hypothesis.  Therefore, we assume
   that $s_i = \lceil k_i / 2 \rceil$ and that a Greene-Kleitman chain $\C$ has minimum $A$ in interval $\Bj$, $j \ge i$.  If $j = i$
   then since $A$ is the minimum of $\C$, there are no unmatched 1's in the binary representation of $A$.  Thus, there are no 
   unmatched 1's in $A \cap [k_i]$.  Then $|A \cap [k_i]| \le \lceil k_i /2\rceil$, so $|A| \le s_i + i - 1$.  Assume $j > i$.  Then
   $A$ is the minimum of a Greene-Kleitman chain so $A \cap [k_i +1] \le \lfloor (k_i + 1)/2 \rfloor$, to avoid unmatched 1's in
   the binary representation of $A$ in positions 1 through  $k_i + 1$.  Since $K_j \subseteq A \subseteq K_j \cup [k_j]$, and 
   $A - [k_i + 1] = \{k_1 + 1, k_2 + 1, \ldots, k_{i-1} + 1\}$, $|A| \le  \lfloor (k_i + 1)/2 \rfloor + i - 1 = \lceil k_i/2 \rceil + i - 1 = s_i + i - 1$.
   
   This completes the induction argument. 
  \end{proof}

%%%%%%%%%%%%%%%%%%%%
\section{Dilworth Partitions of Convex Sets}\label{S:SD}
%%%%%%%%%%%%%%%%%%%%

We are interested in the properties of convex families in $\Bn$, in particular, those that might allow
us to understand the relationship between width and size.  A natural step (underscored by the results in \S\ref{S:binary})
is to consider partitions of convex families by width-many chains. 

Given elements $x < y$ in a partially ordered set $X$, $y$ {\it covers $x$} (or $x$ is a {\it lower cover} of $y$ or $y$ is an {\it upper
cover} of $x$) if $x \le z \le y$ in $X$ implies $z = x$ or $z = y$; denote this by $x \prec y$.  A chain $C$ in $X$ is {\it skipless} if 
$x \prec y$ in $C$ implies $x \prec y$ in $X$.  A partition of $X$ into a family of chains is a {\it Dilworth partition} if there are
$w(X)$ chains in the family.  For brevity, call a Dilworth partition of $X$ into skipless chains an SD-partition of $X$. \\

\begin{theorem}\label{T:SD}
Every convex subset of $\Bn$ has an SD-partition.
\end{theorem}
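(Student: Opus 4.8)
The plan is to prove the statement by induction on $n$, exploiting the structure of convex subsets of $\Bn$ via their $i$-sections, exactly as in the compression arguments used earlier in the paper. Given a convex set $\C$ in $\Bn$, fix a coordinate $i$ and consider the two $(n-1)$-dimensional set systems $\C_{i+}$ and $\C_{i-}$ on $[n]-\{i\}$. Convexity of $\C$ forces several structural facts: both $\C_{i+}$ and $\C_{i-}$ are convex in $\B([n]-\{i\})$, and moreover $\C_{i+} \subseteq \C_{i-}$ is ``sandwiched'' in the sense that $\C_{i-}$ lies between $\C_{i+}$ and the full interval spanned by $\C_{i+}\cup\C_{i-}$ at each level. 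The hope is that from SD-partitions of $\C_{i+}$ and of $\C_{i-}$, obtained by induction, one can splice together an SD-partition of $\C$: chains living entirely in the ``$i-$ only'' part, chains living entirely in the ``$i+$'' part (lifted by adding $i$), and chains that pass from the $i-$ layer up through a matching element of the $i+$ layer, using the covering edge $A \prec A\cup\{i\}$ to make the join skipless.

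The key steps, in order, would be: (1) establish the structural lemma on $i$-sections of a convex set, including the crucial ``nesting at each level'' property that makes a level-by-level matching between $\C_{i+}$ and $\C_{i-}$ possible; (2) show that $w(\C) = \max\bigl(w(\C_{i-}),\ (\text{something involving both sections})\bigr)$ — more precisely understand how antichains of $\C$ decompose, since a maximum antichain of $\C$ splits into an antichain of $\C_{i+}$ (the sets containing $i$) and an antichain of $\C_{i-}\setminus$(those whose $i$-augmentation is in $\C$); (3) take by induction an SD-partition $\mathcal{P}^+$ of $\C_{i+}$ and $\mathcal{P}^-$ of $\C_{i-}$ and argue that, because of the level-nesting, we may choose them ``compatibly'' — i.e. so that the bottom elements of chains in $\mathcal{P}^+$, lifted into $\C$, can be matched to cover-related tops of chains in $\mathcal{P}^-$; (4) glue, and verify both that the result is a partition of $\C$ into skipless chains and that the number of chains equals $w(\C)$, the latter via step (2).

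The main obstacle I expect is step (3): having two separately-chosen SD-partitions of the sections is not enough, because the gluing needs the chains to line up across the $i$-cut. One wants a statement like: a convex set $\D$ in $\B(m)$ together with a distinguished convex subset $\D'$ (playing the role of $\C_{i+}$ sitting inside $\C_{i-}$) admits an SD-partition of $\D$ in which the chains passing through the top level of $\D'$ are exactly a prescribed set of ``feed-in'' points. This is really a strengthened induction hypothesis, and formulating it so that it is both provable and strong enough to carry the gluing is the delicate part. A natural way to manage it is to use the symmetric-chain intuition: track, level by level, how many chains must cross each cut, and set up a system-of-distinct-representatives / flow argument (Hall's condition following from the Kruskal–Katona-type shadow behaviour of convex sets) to realize the matching. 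An alternative, possibly cleaner route is to induct on $|\C|$ rather than $n$, removing a single maximal element $M$ of $\C$ at a time: $\C - \{M\}$ is still convex, so it has an SD-partition by induction, and one must show $M$ can be appended to (or started as) a chain without exceeding $w(\C)$ chains — this reduces the whole theorem to a local statement about when deleting a maximal element drops the width, which is precisely the flavour of Proposition~\ref{P:general} promised later and may be the intended engine of the proof.
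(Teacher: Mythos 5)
Your primary route (induction on $n$ via $i$-sections) has a genuine gap, and in fact starts from a false structural claim: for a convex set that is not a downset one does \emph{not} have $\C_{i+} \subseteq \C_{i-}$ (take $\C=\{\{1\},\{1,2\}\}$ and $i=1$, where $\C_{1-}=\emptyset$), so the ``sandwiching/level-nesting'' that your gluing step relies on is unavailable; moreover, as you yourself note, even granting it you have no proof of step (3), and $w(\C)$ is not determined by the sections in the simple way step (2) suggests. Your alternative sketch --- induct on $|\C|$, delete a maximal element $M$, and try to append $M$ to an SD-partition of $\C-\{M\}$ --- is indeed the paper's starting point, but the proposal stops exactly where the real work begins. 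The only hard case is $w(\C-\{M\})=w(\C)$, where you must show that \emph{some} SD-partition of $\C-\{M\}$ has a chain whose top element is a lower cover of $M$; this is not a local statement, and you cannot outsource it to Proposition~\ref{P:general}, whose proof itself presupposes SD-partitions (i.e.\ Theorem~\ref{T:SD}), so that would be circular.

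What the paper actually does after this common first step is the missing content. It runs alternating paths downward from $M$ through the top two levels, rearranging chains whenever a path ends at a chain-maximum; if every such path fails, it bootstraps this (together with the Dilworth-style reduction showing any maximum antichain must be $\min(\C)$ or $\max(\C)$, and a counting argument with $|N(\mathcal{A})|=|\mathcal{A}|$ applied to an SD-partition of $\C$ minus a different maximal element) into a very rigid structure: $\min(\C)$ and $\max(\C)$ each occupy a single level and have size $w$, and every intermediate level of $\C$ has exactly $w-1$ elements. This forces the existence of a $3$-level convex subset partitioned into $w-1$ skipless $3$-chains $C_i\subset B_i\subset A_i$, which is then ruled out by a separate degree-counting contradiction ($d^+(B_i)<d^+(C_i)$ and $d^-(B_i)<d^-(A_i)$ for all $i$, contradicting $\sum d^+ = \sum d^-$ across the middle level). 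None of the alternating-path bookkeeping, the structural claims (connectedness, equality of $|\min|$, $|\max|$ with $w$, the level sizes), nor the final counting argument appears in your proposal, so as it stands the proof is a plan with its central difficulty unresolved.
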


Although it seems reasonable that convex sets should have SD-partitions, our current proof is a bit involved.   Note that it
is not possible to restrict an arbitrary SD-partition of $\Bn$ to a convex subset $\C$ to obtain an SD-partition of $\C$.
Of course, the restriction will provide a partition of $\C$ into skipless chains.  However the number of chains may exceed
$w(\C)$:  the 4-element set highlighted in $\mathcal{B}_3$ in Figure \ref{F:cube1} has width 2 but intersects 3 chains in
the partition of $\mathcal{B}_3$ given by the dashed-line chains.  

  \begin{figure}
       \scalebox{.4}{\includegraphics{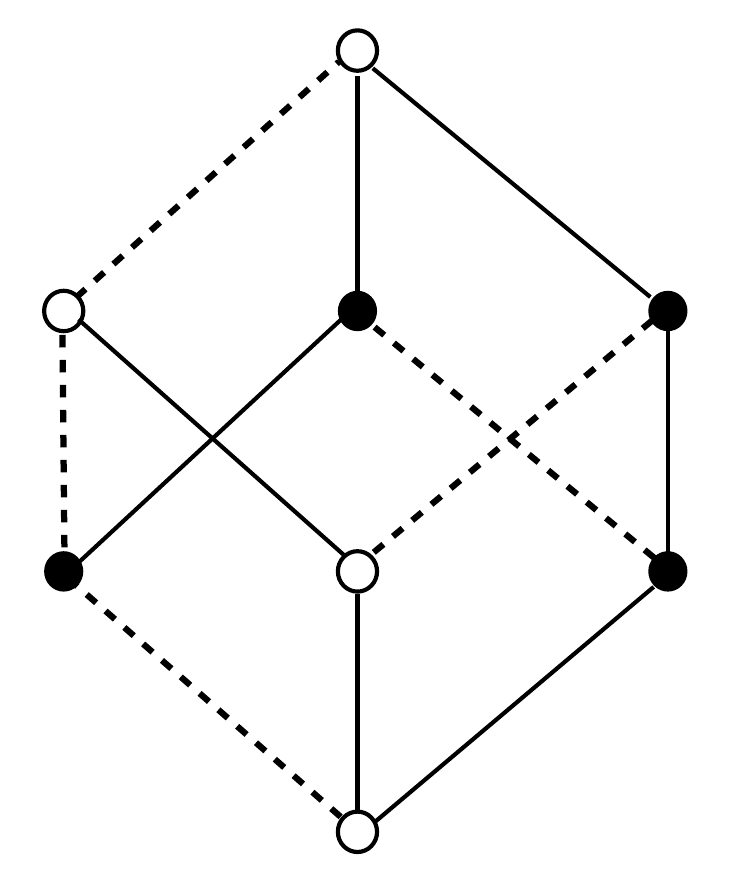}}    
       \caption{A convex subset of $\mathcal{B}(3)$ of width 2 that intersects 3 chains.}\label{F:cube1}    
    \end{figure} 

\begin{proof}
Let $\C$ be a convex subset of $\Bn$ and let $w = w(\C)$.   Proceed by induction on $|\C|$.  For $|\C| = 1$, the result is
obvious.  Induction allows us to assume the following properties.  Recall that a partially ordered set $P$ is {\it connected} if for all
$x, y \in P$ there exist $z_0, z_1, \ldots, z_k \in P$ with $x = z_0$, $y= z_k$, and 
$z_{i - 1}$ is comparable to $z_i$ for $i = 1, 2, \ldots, k$.

{\bf (1)}   $\C$ is connected.  

Otherwise, $\C = \C_1 \cup \C_2$ for disjoint sets $\C_1$ and $\C_2$ with no comparabilities between their elements.
Thus $w(\C) = w(\C_1) + w(\C_2)$ and the union of SD partitions of $\C_1$ and $\C_2$ is an SD partition of $\C$.\\[-.2cm]

{\bf(2)}  If $\mathcal{A}$ is an antichain in $\C$ with $|\mathcal{A}| =  w$ then $\mathcal{A} = \min (\C)$, the
set of minimal elements of $\C$, or $\mathcal{A} = \max (\C)$, the set of maximal elements of $\C$.

This follows from the familiar induction proof of Dilworth's chain decomposition theorem.  If $\mathcal{A}$ is an antichain in 
$\C$ with $|\mathcal{A}| =  w$ and is not contained in either $\min (\C)$ or $\max (\C)$ then
$$\C^* = \{X \in \C \ | \ A \subseteq X \ \text{for some} \ A \in \mathcal{A}\} \ \  \text{and} \ \
    \C_* = \{X \in \C \ | \ X \subseteq A \ \text{for some} \ A \in \mathcal{A}\}$$
are both proper subsets of $\C$ and so, by induction, have SD-partitions.  Since $w(\C_*) = w = w(\C^*)$, the chains in the partition
of $\C_*$ ($\C^*$) each have maximum element (respectively, minimum element) in $\mathcal{A}$.  Thus, we have an SD-partition
of $\C$.\\[-.3cm]

{\bf (3)} We have $|\min(\C)| = w =  |\max(\C)|$.

If  $|\min(\C)| < w$ then $w(\C - \{X\}) = w - 1$ for any $X \in \max(\C)$, so we can add the singleton $\{X\}$ to a $(w-1)$-element
SD-partition of $\C - \{X\}$ to obtain an SD-partition of $\C$.\\[-.2cm]

{\bf (4)}  There do not exist $X \in \min(\C)$ and $Y \in \max(\C)$ with $X \prec Y$.

If such a covering pair exists, we can create an SD-partition of $\C$ from one for $\C - \{X, Y\}$ by including the
covering chain $\{X, Y\}$.\\[-.2cm]

It is convenient to use  $\Le_k$ to denote the set of $k$-element subsets of $[n]$, that is, the $k^{\mathrm{th}}$ level of $\Bn$, $k = 0, 1, \ldots, n$.

{\bf (5)}  There exist $1\le r < t \le n - 1$ such that $\min(\C) = \C \cap \Le_r$,  $\max(\C) = \C \cap \Le_t$ and for all
$r < s < t$, $|\C \cap \Le_s| = w - 1.$ 

To prove {\bf (5)}, we begin with a maximal element $X \in \C$ of maximum cardinality in $\C$, say $|X| = k$.  By induction and noting
$w(\C - \{X\}) = w$ (by  {\bf (1)} and {\bf (3)}), we have an SD-partition $\Ch$ of $\C - \{X\}$ with $w$ chains.  For $S \in \C$, let
$\C(S) \in \Ch$ be the skipless chain containing $S$.  We construct two families of subsets in the bipartite graph $\mathcal{B}$ defined by set containment on the parts $\C \cap \Le_{k-1}$ and $\C \cap \Le_k$.

Let $\Nn_1$ be the set of lower covers of $X_0$ in $\C$, that is, the neighborhood $N(X_0)$ of $X_0$ in $\mathcal{B}$.  For any $X_1 \in \Nn_1$, if $X_1 = \max(\C(X_1))$ then replace $\C(X_1)$ by $\C(X_1) \cup \{X_0\}$
to obtain an SD-partition of $\C$.  Thus, we may assume that for all $X_1 \in \Nn_1$, $X_1 \prec \max(\C(X_1))$.  Let 
$$\A_2 = \{  \max(\C(X_1)) \ | \ X_1 \in \Nn_1 \}.$$
Let $\Nn_3 = N(\A_2) - \Nn_1$.  Given  $X_3 \in \Nn_3$, with$X_3 \prec X_2 = \max(\C(X_1)$ for $X_1 \in \Nn_1$,
if $X_3 = \max(\C(X_3))$ then replace $\C(X_1)$ and $\C(X_3)$ by  $(\C(X_1) - \{X_2\}) \cup \{X_0\}$ and $\C(X_3) \cup \{X_2\}$ in 
$\Ch$ to obtain an SD-partition of $\C$.  Thus, we may assume that $X_3 \prec \max(\C(X_3))$ and let
$$\A_4 = \{  \max(\C(X_3)) \ | \ X_3 \in \Nn_3 \}.$$

Suppose we have constructed the subsets $\A_2, \A_4, \ldots, \A_{2i}$ of  $\C \cap \Le_k$, and $\Nn_1, \Nn_3, \ldots, \Nn_{2i-1}$ of  $C \cap \Le_{k-1}$.  Let $\Nn_{2i+1} = N(\A_{2i}) - (\Nn_1 \cup \Nn_3 \cup \cdots \cup \Nn_{2i-1})$, if this is nonempty, else we stop.  If nonempty, let $X_{2i+1} \in \Nn_{2i+1}$ with a path  $\{X_0, X_1, \ldots, X_{2i}\}$ in $\mathcal{B}$ such that $X_{2j} = \max(\C(X_{2j-1}))$, $X_{2j+1}\in \Nn_{2j+1}$, $X_{2j+1}  \prec X_{2j}$, $j = 1, 2, \dots i$, and $X_1 \in \Nn_1$.  If 
$X_{2i+1} = \max(\C(X_{2i+1}))$ then
$$(\C(X_1) - \{X_2\}) \cup \{X_0\}, (\C(X_3) - \{X_4\}) \cup \{X_2\}, \dots,  \C(X_{2i+1})  \cup \{X_{2i}\}$$
replaces the obvious chains in $\Ch$ to provide an SD-partition of $\C$.   Thus, we may assume that $X_{2i+1} \prec
 \max(\C(X_{2i+1}))$  and  
 $$\A_{2i+2}  = \{  \max(\C(X_{2i+1}) \ | \ X_{2i+1} \in \Nn_{2i+1} \}$$  
 is nonempty.  Consequently, this construction terminates with $\A_{2k}$ for some $k$.
 
 Let $\mathcal{A} = \bigcup_{i = 1}^k \A_{2i}$  and let $\Nn = \bigcup_{i = 1}^{k} \Nn_{2i - 1}$.  Then $\Nn = N(\A)$ since the definition of the $\Nn_i$'s gives $\Nn_{2i-1} \subseteq N(\A)$ and, as $\Nn_{2k+1} =\emptyset$, 
 $N(\A) \subseteq \Nn$.  The map $X \mapsto \max(\C(X))$ is a bijection of $\Nn_{2i-1}$ to $\A_{2i}$ for $i = 1, 2, \ldots, k$.  Therefore, $|\A| = |N(\A)|$.
 
 %%%%%
\begin{figure}[h]
\begin{center}
\scalebox{.9}{\includegraphics{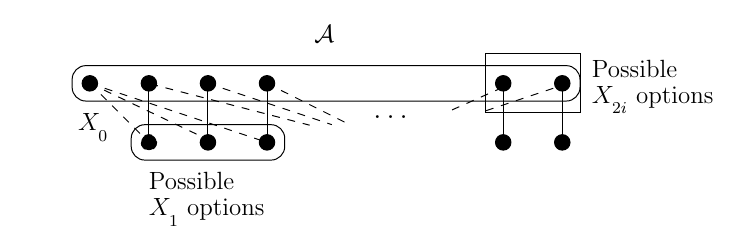}}
\caption{Searching for an alternating path terminating in an $X_{odd}$}
\label{figure:alt}
\end{center}
\end{figure}
%%%%%
 
 Suppose that there exists $Y \in \max(\C) - (\mathcal{A} \cup \{X_0\})$.  By induction, there is an SD-partition of $\C - \{Y\}$ into $w$ chains
 by {\bf (3)}.   Each member $X_{2i}$ of $\mathcal{A} \cup \{X_0\}$ is in such a chain, which requires that a lower cover of $X_{2i}$, 
 a member of $N(\mathcal{A})$, is in the same chain or $\{X_{2i}\}$ is itself a chain in the SD-partition.  Since $|N(\mathcal{A})| =
 |\mathcal{A}|$, there must be a singleton chain.  This contradicts {\bf(3)}.  Therefore, $\max(\C) = \mathcal{A} \cup {X_0}.$
 
 We have shown that  $\max(\C) = \C \cap \Le_t$ (replacing $k$ by $t$), $|\max(\C)| = w$.  The set of lower covers of the maximals
 is $\C \cap \Le_{t-1}$.  This set has size $w - 1$ since $|\C \cap \Le_{t-1}| \le w - 1$ by {\bf(3)} and were $|\C \cap \Le_{t-1}| \le w - 2$
 we would apply induction to $\C - \{X\}$ for any $X \in \max(\C)$ to obtain an SD-partition into $w$ chains.  This partition would have
a chain consisting of a single maximal of  $\C - \{X\}$, but this contradicts $|\min(\C - \{X\})| = w$.  Dually,  $\min(\C) = \C \cap \Le_r$ ($r \le t - 2$ by {\bf (4)}).  The set of upper covers
 of the minimals is $\C \cap \Le_{r + 1}$ and has size $w - 1$.   Consider $\C - \{U, V\}$, where $U \in \min(\C)$ and 
 $V \in \max(\C)$.  This is a convex set with width $w - 1$ and so has a partition into $w - 1$ skipless chains.  Thus, for all
 $r < s < t$, $|\C \cap \Le_s| = w - 1$.  This completes the proof of {\bf(5)}.\\[-.2cm]
 
 With {\bf (5)}, we construct a 3-level convex subset, say $\mathcal{T}$, of $\Bn$, say contained in 
 $\Le_{k-1} \cup \Le_{k} \cup \Le_{k+1}$, that has a partition into $w-1$ 3-element chains, say $C_{i} \subset B_{i} \subset A_{i}$, 
 $i= 1, 2, \ldots, w-1$.  We finish the proof by showing that a 3-level convex subset of width $w-1$ and size $3(w-1)$ does not exist
 in the Boolean lattice.  
 
For each $X \in \mathcal{T}$ let $d^+(X)$ ($d^-(X)$) denote the number of upper (respectively, lower) covers of $X$ in $\mathcal{T}$.
For any $B_i$, $B_i \prec A_j$ if and only if $A_j = B_i \cup \{t\}$.  Then $C_i$ has upper covers $B_i$ and $C_i \cup \{t\}$, so
at least one more than $B_i$.  Thus, $d^+(B_i) < d^+(C_i)$ for each $i$.  Dually, $d^-(B_i) < d^-(A_i)$ for each $i$.   Without
loss of generality, $\sum d^+(C_i) \le \sum d^-(A_i)$.  Then
$$\sum d^+(B_i) < \sum d^+(C_i) \le \sum d^-(A_i) ,$$
which contradicts the fact that $\sum d^+(B_i) =  \sum d^-(A_i)$.
\end{proof}

%%%%%%%%%%%%%%%%%%%%
\section{Width and General Downsets}\label{S:downsets}
%%%%%%%%%%%%%%%%%%%%

Each initial segment of the elements of $\Bn$ listed in the binary order is a downset with respect to the containment order
on $\Bn$.  Proposition \ref{P:special} characterizes the positions in the binary list at which the width of the induced downset 
increases.  We can provide an analogous description for general downsets using SD-partitions, introduced 
for convex subsets in \S\ref{S:SD}.  First, we use alternating paths to give a level-by-level description of SD-partitions.

Let $\D$ be a downset in $\Bn$ that intersects levels $0, 1, \ldots, l$ and let $\Gi$ be the bipartite graph induced by
$\D$ on the parts $\D \cap \Le_{i-1}$ and $\D \cap \Le_i$, $i = 1, 2, \ldots, l$ (where as in the preceding section we denote
level $i$ by $\Le_i$).   Let $\Ch$ be an SD-partition of $\D$.
We claim first that each matching
$$\mathbf{M}_i =  \{\{X, Y\} | X, Y \in \Gi \ \text{and belong to the same chain in} \ \Ch \}$$
is a maximum sized matching in  $\Gi$.   If not, there is an alternating path $\{X_1, Y_1, \dots , X_r, Y_r\}$ in $\Gi$ such
that $\{X_j, Y_j\}$, $j = 1, 2, \ldots, r$, belong to a maximum-sized matching and  
$$\{Y_j , X_{j +1}\} \in \mathbf{M}_i, \ j = 1, 2, \ldots, r-1.$$
Then there are $r+1$ chains in $\Ch$ containing $\{X_1\}, \{Y_1, X_2\}, \ldots , \{Y_{r-1}, X_r\}, \{Y_r\}$ that can be replaced
by $r$ chains containing $\{X_1, Y_1\}, \dots, \{X_r, Y_r\}$.  This gives a partition of $\D$ into fewer than $w(\D)$-many
chains, a contradiction.

On the other hand, let $\mathbf{M}_i$ be a maximum-sized matching in $\Gi$, $i = 1, 2, \dots, l$, and let $\mathcal{M}$
be a graph with vertex set $\D$ and edge set $E(\mathcal{M}) = \cup_{i=1}^l \mathbf{M}_i$.  As a graph, the connected components of 
$\mathcal{M}$ are just paths.  As an ordered set $\mathcal{M}$ provides a partition of $\D$ into
skipless chains.  By the preceding paragraph, the number $e(\mathcal{M})$ of edges in  $\mathcal{M}$ is the same
as the number in an SD-partition of $\D$.  Thus, $|E(\mathcal{M})| = |\D| - w(\D)$ since the number of components of an
SD-partition, regarded as a graph, is the number of chains in a Dilworth partition of $\D$, the width of $\D$.  It
follows that the number of connected components of $\mathcal{M}$ is $w(\D)$ and that  $\mathcal{M}$ is an
SD-partition of $\D$.

Alternating paths allow us to prove something along the lines of Proposition \ref{P:special} for arbitrary downsets.
Let $\D$ be a downset in $\Bn$ and let $\Ch$ be an SD-partition of $\D$.  Suppose that $Y \in \Bn - \D$ with $|Y| = k$
and that $\D' = \D \cup \{Y\}$ is also a downset.   Let $\Gk$ be the bipartite graph induced by $\D'$ on parts
$\D' \cap \Le_{k-1}$ and $\D' \cap \Le_k$ and let $\mathbf{M}$ be the matching in $\Gk$ consisting of the edges of
$\Ch$ in $\Gk$.   

A path $\{Y, X_1, Y_1, X_2, Y_2, \ldots, X_r, Y_r, X_{r+1}\}$ in $\Gk$ such that each 
$\{X_i, Y_i\} \in \mathbf{M}$ and $X_{r+1}$ is the maximum element of its chain in $\Ch$ is called {\it augmenting}.\\

\begin{proposition}\label{P:general}
With the preceding notation, $w(\D) = w(\D')$ if and only if there is an augmenting path in $\Gk$.  
\end{proposition}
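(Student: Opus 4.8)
The plan is to translate the question about width into a question about the sizes of maximum matchings in the bipartite graphs $\Gi$, using the level-by-level description of SD-partitions developed just above the statement. The key observation is that $w(\D) = |\D| - \sum_{i} \nu(\Gi)$, where $\nu(\Gi)$ denotes the size of a maximum matching in $\Gi$; this follows because an SD-partition, viewed as a graph, has $|\D| - w(\D)$ edges and exactly $\sum_i \nu(\Gi)$ of them (one $\mathbf{M}_i$ per level, each of maximum size). Consequently $w(\D') = w(\D)$ if and only if $|\D'| - \sum_i \nu(\Gi') = |\D| - \sum_i \nu(\Gi)$, i.e. (since $|\D'| = |\D| + 1$ and the only graph that changes is $\Gk$) if and only if $\nu(\Gk') = \nu(\Gk) + 1$, where $\Gk'$ is $\Gk$ together with the new vertex $Y$ in the $(\Le_{k})$-part and all its edges down to $\D' \cap \Le_{k-1} = \D \cap \Le_{k-1}$.

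First I would set up this reduction carefully, noting that adding $Y$ only affects the single bipartite graph at levels $k-1, k$, and that all lower levels are unchanged. Then the statement becomes: $\nu(\Gk') > \nu(\Gk)$ if and only if there is an augmenting path in $\Gk$ in the sense defined. One direction is immediate from Berge's theorem (a matching is maximum iff it admits no augmenting path): if there is an augmenting path $\{Y, X_1, Y_1, \ldots, X_r, Y_r, X_{r+1}\}$ with $X_{r+1}$ the top of its chain in $\Ch$ (so $X_{r+1}$ is $\mathbf{M}$-unsaturated in $\Gk$, because an element is the maximum of its skipless chain precisely when it is not matched upward within $\Gk$), then alternating this path increases $|\mathbf{M}|$ by one, so $\nu(\Gk') \geq |\mathbf{M}| + 1 > \nu(\Gk)$. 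Since $Y$ is a new vertex, $\nu(\Gk') \le \nu(\Gk)+1$ always, so in fact $\nu(\Gk') = \nu(\Gk)+1$ and $w(\D') = w(\D)$.

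For the converse I would argue contrapositively: suppose no augmenting path exists. I need to show $\nu(\Gk') = \nu(\Gk)$, equivalently that $\mathbf{M}$ (regarded as a matching of $\Gk'$, which leaves $Y$ unsaturated) is still maximum in $\Gk'$. By Berge's theorem it suffices to rule out an $\mathbf{M}$-augmenting path in $\Gk'$. Any such path must start at an $\mathbf{M}$-unsaturated vertex; the new vertex $Y$ is one, and I would argue this is the only one that can begin a new augmenting path — the $\mathbf{M}$-unsaturated vertices in the $\Le_{k-1}$-part were already available in $\Gk$ and $\mathbf{M}$ was maximum there, so no augmenting path of $\Gk$ exists, and adding $Y$ only adds edges incident to $Y$. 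Hence an augmenting path of $\Gk'$ must pass through $Y$, and since $Y$ lies in the $\Le_k$-part it must be an endpoint (its partner along the path is some $X_1 \in \D \cap \Le_{k-1}$). The rest of the path is an $\mathbf{M}$-alternating path in $\Gk$ ending at an $\mathbf{M}$-unsaturated vertex in the $\Le_k$-part, and an $\mathbf{M}$-unsaturated vertex of $\Gk$ in that part is exactly a maximum element of its chain in $\Ch$. So this is precisely an augmenting path in the sense defined, contradicting our assumption.

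The main obstacle is the bookkeeping in the reduction $w(\D) = |\D| - \sum_i \nu(\Gi)$ and the verification that modifying $\Gk$ in isolation reflects the change in $w$; once that identity is in place the combinatorics is just Berge's theorem applied to the one graph $\Gk'$, with care taken that "maximum element of its chain in $\Ch$" and "$\mathbf{M}$-unsaturated in the upper part of $\Gk$" mean the same thing. I would also double-check the edge case where $\D' \cap \Le_{k-1}$ could be empty or $k=0$ (then $Y = \emptyset$, $\D$ was empty, and both sides are trivially handled), but this is routine.
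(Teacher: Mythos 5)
Your argument is correct, and it reaches the proposition by a route that is genuinely different from the paper's, most visibly in the converse direction. You fold everything into matching theory: from the level-by-level discussion preceding the proposition you extract the identity $w(\D)=|\D|-\sum_i \nu(\Gi)$, note that passing from $\D$ to $\D'$ changes only the bipartite graph between levels $k-1$ and $k$ (the graph between levels $k$ and $k+1$ merely gains the isolated vertex $Y$, because $Y$ is maximal in $\D'$ --- a one-line check you should state), and then the claim becomes ``the maximum matching grows by exactly one iff there is an augmenting path from $Y$,'' which is Berge's theorem once one observes that a vertex of the $\Le_{k-1}$-part is $\mathbf{M}$-unsaturated precisely when it is the maximum of its chain in $\Ch$. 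The paper instead works directly with chain partitions: its forward direction performs the same flip as yours but at the level of chains, re-splicing the $r+1$ chains of $\Ch$ met by the path into an SD-partition of $\D'$ of the same size; its converse takes a second SD-partition $\Ch'$ of $\D'$ and grows an alternating path using edges of $\Ch'$ and $\Ch$ in turn, which (absent an augmenting path) terminates so that $\Ch'$ can be re-spliced to make $\{Y\}$ a singleton chain, contradicting $w(\D)=w(\D')$. In effect the paper re-proves the needed instance of Berge's theorem by hand, never leaving the language of SD-partitions, whereas you quote Berge and lean on the counting identity, which the preceding paragraphs do establish for arbitrary downsets and hence for $\D'$ as well; your version is more modular, the paper's is self-contained. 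Two small slips to repair: in your converse the augmenting path starting at $Y\in\Le_k$ has odd length and therefore ends at an unsaturated vertex of the $\Le_{k-1}$-part (you wrote $\Le_k$), which is exactly where ``unsaturated'' translates to ``maximum of its chain in $\Ch$''; and the remark that $Y$ has no upper covers in $\D'$ should be made explicit so that no level graph other than the one at levels $k-1,k$ acquires edges.
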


\begin{proof}
Given an augmenting path $\{Y, X_1, Y_1, X_2, Y_2, \ldots, X_r, Y_r, X_{r+1}\}$ in $\Gk$, the $r + 1$ skipless chains
in an SD-partition of $\D$ each containing one of
$$\{X_1, Y_1\}, \{X_2, Y_2\}, \ldots, \{X_r, Y_r\} \  \text{and} \  \{X_{r+1}\}$$
can be replaced by $r + 1$ skipless chains each containing one of
$$\{X_1, Y\}, \{X_2, Y_1\}, \ldots, \{X_r, Y_{r - 1}\} \  \text{and} \  \{X_{r+1}, Y_r\}$$
to create an SD-partition of the same size for $\D'$.

To prove the converse, we assume that $w(\D) = w(\D')$ and that there is no augmenting path in $\Gk$.  We have
an SD-partition $\Ch$ of $\D$; let $\Ch'$ be an SD-partition of $\D'$.   We consider alternating paths in $\Gk$ using
only edges from the chains in $\Ch$ or $\Ch'$ that begin with edges $\{Y, X_1\}$ from a chain in $\Ch'$.  There must
be such an edge since otherwise $Y$ belongs to a 1-element chain in $\Ch'$ which would contradict $w(\D) = w(\D')$.
If $X_1$ is maximum in its $\Ch$ chain, stop; otherwise add an edge $\{X_1, Y_1\}$ from a chain in $\Ch$.  If $Y_1$
is minimum in its $\Ch'$ chain, stop; otherwise, add an edge $\{Y_1, X_2\}$ from a chain in $\Ch'$.   Continue in the
same manner.  Since there is no augmenting path, this process must terminate with an edge $\{X_r, Y_r\}$ from $\Ch$.  

Now replace the $r + 1$ skipless chains from $\Ch'$ that each contain one of
$$\{Y, X_1\}, \{Y_1, X_2\}, \ldots, \{Y_{r - 1}, X_r\} \  \text{and} \  \{Y_{r+1}\}$$
with $r+1$ skipless chains that each contain one of
$$\{Y\}, \{X_1, Y_1\}, \{X_2, Y_2\}, \ldots, \{X_r, Y_r\}.$$
Because $Y$ is maximal in $\D'$, the resulting SD-partition of $\D'$ has a singleton chain $\{Y\}$.  This again
contradicts $w(\D) = w(\D')$.
\end{proof}

%%%%%%%%%%%%%%%%%%%%
\section{Maximizing the generated downset}\label{S:shadows}
%%%%%%%%%%%%%%%%%%%%
Call a family of $r$-sets {\it top-heavy} or simply {\it heavy} if there is no larger
antichain in the downset it generates.  We would like to answer this question: among heavy families of $r$-sets of 
given size, which one generates the largest downset?  We formulate a conjecture, verify it for the first nontrivial
case and prove a (rather weak) bound.  For $\mathcal{T} \subseteq \Bn$, let $\dw \mathcal{T}$ denote the
downset generated by $\mathcal{T}$.  We use the standard shadow notation in the case of downsets $\D$
of $\Bn$: given a family $\mathcal{T}$ of $k$-sets in $\D$, let $\Delta(\mathcal{T})$ be the set of all $Y \in \ \dw \!\mathcal{T} \cap \D$
with $|Y| = k - 1$.

\begin{conjecture} \label{topheavyconj}
Let $\mathcal{T}$ be a heavy family of $t$ $r$-sets.  Then
$$|\dw \mathcal{T}| \ \le \ \left[\frac{2^{2r-2}-1}{\binom{2r-1}{r}}+1\right] t \ +  \ 1 \ .$$
\end{conjecture}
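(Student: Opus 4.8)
The plan is to reduce the problem to understanding a single heavy family and then bootstrap to arbitrary $t$ by a splitting argument. First I would establish the base case $t=1$: a single $r$-set $A$ generates the Boolean interval $[\emptyset,A]\cong\mathcal B(r)$, which has $2^r$ elements and width $\binom{r}{\lfloor r/2\rfloor}$, and the $r$-set itself is an antichain of size $1$; so for the family to be heavy we need $\binom{r}{\lfloor r/2\rfloor}\le 1$, forcing $r\le 1$. Thus for $r\ge 2$ a heavy family must have $t\ge 2$, and I would next look at the true base case $t=2$ (or whatever the smallest feasible size is): here the generated downset is a union of two Boolean cubes $[\emptyset,A]\cup[\emptyset,B]$ glued along $[\emptyset,A\cap B]$, and the heaviness condition says its width is exactly $2$. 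Since a skipless chain decomposition (Theorem \ref{T:SD}) gives width equal to the number of chains, and since $\mathcal B(r)$ already has width $\binom{r}{\lfloor r/2\rfloor}$ inside each cube, the only way to keep the total width down to $2$ is for the overlap $[\emptyset,A\cap B]$ to be almost all of each cube — forcing $|A\cap B|=r-1$, i.e. $A,B$ differ in one element. Then $|\dw\mathcal T|=2^r+2^{r-1}$, which one checks is consistent with (indeed, I expect, the extremal case of) the claimed bound, and the constant $\frac{2^{2r-2}-1}{\binom{2r-1}{r}}+1$ should emerge precisely from the extremal configuration at $t=$ (the relevant power of two).

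Second, for general $t$ I would argue by induction, splitting a heavy family $\mathcal T$ of $t$ $r$-sets into two heavy subfamilies. The natural splitting is along a coordinate: pick $i$ maximizing the imbalance and write $\mathcal T=\mathcal T_{i+}\cup\mathcal T_{i-}$ via the $i$-sections, or alternatively split $\mathcal T$ into $\mathcal T'$ and $\mathcal T''$ with $|\mathcal T'|,|\mathcal T''|<t$ in such a way that $\dw\mathcal T=\dw\mathcal T'\cup\dw\mathcal T''$ and the overlap $\dw\mathcal T'\cap\dw\mathcal T''$ is not too large. The key subtlety is that sub-families of a heavy family need not be heavy: heaviness is not inherited. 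To get around this I would work with the contrapositive quantity — track, for any family $\mathcal S$ of $r$-sets, the ratio $(|\dw\mathcal S| - 1)/|\mathcal S|$ together with a penalty term for the excess width $w(\dw\mathcal S)-|\mathcal S|$, so that the inductive statement is a weighted inequality that *is* inherited by sub-families. Concretely I would aim to prove the stronger statement that for every family $\mathcal S$ of $r$-sets, $|\dw\mathcal S|\le c_r\,w(\dw\mathcal S) + (\text{lower-order})$ where $c_r=\frac{2^{2r-2}-1}{\binom{2r-1}{r}}+1$ times the appropriate normalization; heaviness then just substitutes $w(\dw\mathcal S)=|\mathcal S|=t$. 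This reformulation is what makes the induction go through, since width is additive under the disjoint-union step (property {\bf(1)} in the proof of Theorem \ref{T:SD}) while generated-downset size is sub-additive, and the defect is controlled by the size of the shared sub-downset, which by Kruskal--Katona is minimized by a colex initial segment.

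Third, I would pin down the constant. The number $\frac{2^{2r-2}-1}{\binom{2r-1}{r}}$ is exactly $(|\mathcal B(2r-1)|/2 - 1)/\binom{2r-1}{r}$ — that is, it is $(|\dw\mathcal T_0|-1)/w(\dw\mathcal T_0)$ for $\mathcal T_0$ equal to, say, all $r$-subsets of $[2r-1]$: this family generates $[2r-1]^{(\le r)}$ which has $\sum_{j\le r}\binom{2r-1}{j}=2^{2r-2}$ elements and, by Sperner applied to this truncated cube, width $\binom{2r-1}{r}$. So the extremal heavy families in the conjecture should be disjoint unions of translated copies of this $\mathcal T_0$, contributing $t=\binom{2r-1}{r}$ $r$-sets and $2^{2r-2}$ downset-elements per block, plus possibly one extra point — accounting for the ``$+1$'' at the end and the ``$+1$'' inside the bracket (the latter being the $t$ $r$-sets themselves, which lie in the downset but are ``free'' in the width count). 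I would therefore prove the bound by showing no local configuration beats the ratio $\frac{2^{2r-2}}{\binom{2r-1}{r}}$ per $r$-set, using a shadow/Kruskal--Katona estimate to bound $|\dw\mathcal S|$ against the number of maximal elements together with an SD-partition (Theorem \ref{T:SD}) to convert the width into a chain count.

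The main obstacle, I expect, is precisely the non-inheritance of heaviness: unlike the downset-size minimization problems earlier in the paper, where compression arguments apply cleanly, here we cannot compress freely because compressing the $r$-sets can change the width of the generated downset in an uncontrolled way, and we cannot simply delete sets because that can destroy heaviness. Finding the right strengthened inductive invariant — one that is both preserved under the natural splitting and tight at the conjectured extremal families — is the crux; everything else (the $t=1,2$ base cases, the Kruskal--Katona shadow bound, and the identification of the constant) is comparatively routine. If the clean invariant resists, the fallback is the ``rather weak bound'' the authors allude to: bound $|\dw\mathcal S|\le 2^r|\mathcal S|$ trivially and bound $w(\dw\mathcal S)\ge$ (width of one cube) crudely, which gives a constant of the right shape but with a worse coefficient, and this is presumably what appears in the paper after the conjecture.
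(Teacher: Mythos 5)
You are attempting to prove Conjecture \ref{topheavyconj}, which the paper itself leaves open: the authors only give the matching construction (disjoint copies of the levels $0,\dots,r$ of $\mathcal{B}(2r-1)$, showing the bound would be tight when $t$ is a multiple of $\binom{2r-1}{r}$), the weak general bound $f_r(t)\le t(r/3+4\sqrt r)$ of Proposition \ref{P:f_h}, and the case $r=3$ of Proposition \ref{P:r=3}. So there is no proof in the paper to compare against, and your sketch would have to stand on its own; as written it does not, because its central steps fail.

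Concretely: the strengthened invariant on which your induction rests --- that for \emph{every} family $\mathcal{S}$ of $r$-sets one has $|\dw\mathcal{S}|\le c_r\,w(\dw\mathcal{S})+(\text{lower order})$ with $c_r=\frac{2^{2r-2}-1}{\binom{2r-1}{r}}+1$ --- is false. For $t$ pairwise disjoint $r$-sets, $|\dw\mathcal{S}|=t(2^r-1)+1$ while $w(\dw\mathcal{S})=t\binom{r}{\lfloor r/2\rfloor}$, and already for $r=5$ one has $2^r-1=31>c_5\binom{5}{2}\approx 30.2$; asymptotically the left side is $\approx 2^r$ per set versus $\approx 2^r/\sqrt2$ on the right. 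So the non-inheritance of heaviness, which you correctly identify as the crux, cannot be evaded by passing to a width-weighted inequality over all families, and the inductive splitting has no valid base to stand on. Your base-case analysis is also wrong: for $r\ge 3$ there is no heavy family with $t=2$, since a single cube $[\emptyset,A]$ already contains an antichain of size $\binom{r}{\lfloor r/2\rfloor}>2$; nothing forces $|A\cap B|=r-1$, and (as the paper notes) heavy families require $t\ge\binom{r}{\lfloor r/2\rfloor}$. A smaller slip: $\sum_{j\le r}\binom{2r-1}{j}=2^{2r-2}+\binom{2r-1}{r}$, not $2^{2r-2}$; with this corrected your identification of the constant does agree with the paper's extremal construction. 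Finally, your fallback is not what the paper proves: Proposition \ref{P:f_h} is obtained not from the trivial $2^r|\mathcal{S}|$ bound but by showing, via Kruskal--Katona applied inside the upset of a $k$-set, that every $k$-set of a heavy downset has at least $2(r-k)-1$ upper covers, which forces geometric decay of the lower level sizes; and the proof of the $r=3$ case is an averaging argument showing singletons have on average at least $4$ upper covers, quite different from your outline.
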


Let $f_r(t)$ be the maximum size of a heavy downset generated by $t$ $r$-sets.

Here are some straightforward observations about this function.  First, $f_r(t)$ is not defined for small values of $t$.  
For instance,
if $t < \binom{r}{\lfloor r/2\rfloor}$, then one maximal element of any downset of height $r$ contains $\binom{r}{\lfloor r/2\rfloor}$ subsets
of size $\lfloor r/2\rfloor$, an antichain in the downset; thus, no heavy downset of height $r$ and width  $t < \binom{r}{\lfloor r/2\rfloor}$
can exist.  Second, $f_r(t) \leq rt+1$ since every level of a top-heavy downset has 
size at most $r$ and level 0 consists of $\emptyset$.  Third, 
$$f_1(t)=t+1\ \text{and} \  f_2(t)=2t+1$$
follow from simple constructions.

Provided that $t=k \cdot \binom{2r-1}{r}$ if the conjecture is correct then it would be tight by the following construction. 
Suppose that $X_i$, $i = 1, 2, \ldots, k$, are pairwise disjoint subsets of $[n]$, with each $|X_i | = 2r-1$.  Let the downset 
$$\HH \ = \ \{ A \ | \ |A| \le r \ \text{and} \ A \subseteq X_i \ \text{for some} \ i = 1, 2, \ldots, k \}.$$
Thus,  $\HH$ is the union of $k$ copies of the first $r$ levels of $\mathcal{B}(2r - 1)$.  Each copy only has the empty set as a common element in the union. Furthermore, the width of this downset is $k \cdot \binom{2r-1}{r}$ and the number of elements is $k \cdot (2^{2r-1}/2-1+\binom{2r-1}{r})+1$.

We note that:
\[\left[\frac{2^{2r-2}-1}{\binom{2r-1}{r}}+1\right]t + 1 \ \approx \ t\sqrt{\frac{\pi(2r-1)}{8}} \ = \ \Theta (t\sqrt{r}) .\]

We can improve the trivial upper bound  $f_r(t) \leq rt+1$ by about a third by showing that the total number of elements at height $2r/3$
in a heavy downset of height $r$ and width $t$ is less than $ 4 t \sqrt{r}$.

\begin{proposition}\label{P:f_h}
$f_r(t) \leq  t(r/3 + 4 \sqrt{r})$
\end{proposition}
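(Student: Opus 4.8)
The plan is to bound the total number of sets in a heavy downset $\D$ of height $r$ at levels \emph{near} the level $\lceil 2r/3\rceil$, and then pay the trivial bound of $\le r$ sets per level on the remaining levels. Write $w = w(\D) = t$ for the width. The key observation is that a heavy downset is quite constrained: by definition, the widest antichain has size exactly $t$, but each maximal element of $\D$ (an $r$-set) contains a full middle level $\binom{r}{\lfloor r/2\rfloor}$ of subsets, so in fact $t \ge \binom{r}{\lfloor r/2\rfloor} \sim 2^r/\sqrt{r}$ already. What I want to leverage is the reverse direction: since no antichain exceeds $t$, the level sizes $|\D \cap \Le_k|$ cannot all be large for $k$ in a window around $2r/3$.

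First I would fix a window $W = \{k : |k - 2r/3| \le c\sqrt{r}\}$ of width $\Theta(\sqrt r)$ around $\lceil 2r/3\rceil$, and argue that $\sum_{k \in W} |\D \cap \Le_k|$ is not much bigger than $t\sqrt{r}$. The cleanest route is a normalized-matching / LYM-type argument: apply Theorem~\ref{T:SD}, or rather the skipless-chain machinery of \S\ref{S:downsets}, to decompose (the convex "slab" of) $\D$ between two levels into $t$ skipless chains; each such chain contributes at most one set to each level, so $|\D \cap \Le_k| \le t$ for every $k$, and more usefully, summing over $k \in W$ gives $\sum_{k\in W}|\D\cap\Le_k| \le t|W| = O(t\sqrt r)$. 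With the constant $c$ chosen so that $|W| \le 4\sqrt r$, this yields $\sum_{k \in W}|\D \cap \Le_k| \le 4t\sqrt{r}$. For the levels outside $W$, I use the trivial bound $|\D \cap \Le_k| \le r$ (every level of a heavy downset has size at most $r$, as already noted in the text, since a larger level would itself be too large an antichain relative to... no: rather, each of the $t$ maximal $r$-sets contributes at most one set to a given level only if skipless chains are used — here I should instead cite the stated fact directly: "every level of a top-heavy downset has size at most $r$"). Wait — that bound gives $|\D| \le rt+1$, not what I want; the point is that \emph{near} level $2r/3$ we should do better than $r$ per level by a different argument. Let me restructure: the real content is that the levels \emph{below} $2r/3$ are the expensive ones.

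So the actual accounting I would carry out: levels $0,1,\dots,\lceil 2r/3\rceil - 1$ number roughly $2r/3$ of them, and contribute at most $r$ each except — here is where the heaviness bites — I claim the levels in the window $W$ around $2r/3$ contribute at most $\sum_{k\in W}|\D\cap\Le_k|$, which the skipless-chain bound caps at $t|W| \le 4t\sqrt r$, whereas the naive count would have charged $r|W|$, and $r|W| \gg 4t\sqrt r$ is false — so in fact the gain is the other way. The correct final inequality assembles as: levels strictly below the window contribute $\le r \cdot (2r/3 - c\sqrt r) \le r^2\cdot\tfrac13 \cdot \tfrac{1}{?}$... I will instead split as $|\D| \le \sum_{k < 2r/3 - c\sqrt r} r \;+\; \sum_{k \in W} t \;+\; \sum_{k > 2r/3 + c\sqrt r}^{r} r \;+\;1$, bound the first sum by $r\cdot(2r/3) \le$ too big. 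The resolution, and the genuinely new estimate, is that below level $2r/3$ the level sizes are bounded by $\binom{k}{\lfloor k/2\rfloor}$ times the number of maximal elements reaching them, but the clean statement to prove is: \emph{the number of sets at heights $> 2r/3$ is at most $(r/3)t$}, via skipless chains (each of $t$ chains passes through at most $r - \lceil 2r/3\rceil \le r/3$ levels above $2r/3$), while the number at heights $\le 2r/3$ outside a $4\sqrt r$-window is absorbed into the $4\sqrt r\, t$ term by the LYM-type sum $\sum_{k}|\D\cap\Le_k|/\binom{r}{k}\cdot(\text{something}) \le t$. The honest plan: (i) use Theorem~\ref{T:SD} on the convex set $\D \cap (\Le_{\lceil 2r/3\rceil} \cup \cdots \cup \Le_r)$ to get $\le t$ skipless chains, bounding the mass strictly above $2r/3$ by $(r/3)t$; (ii) use a normalized matching argument on $\D$ itself to show $\sum_{k \le 2r/3}|\D\cap\Le_k|$, once we subtract a single "column" of size $\le 2r/3 < 4\sqrt r \cdot$ ... ; (iii) combine, with the "$+1$" from $\emptyset$.

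\textbf{The main obstacle.} The crux is step (ii): bounding the number of \emph{low} elements (heights up to $\approx 2r/3$) of a heavy downset by $O(t\sqrt r)$ rather than the trivial $O(rt)$. Heaviness says only that the widest antichain is $t$; I must convert this into control of a \emph{sum} of level sizes. The right tool is the normalized matching property of $\Bn$ (equivalently, that initial segments of colex minimize shadows — Kruskal–Katona, already invoked in the paper): from it, for each maximal $r$-set $M$ the levels of $\downarrow M$ below $2r/3$ have sizes $\binom{r}{0},\binom r1,\dots,\binom{r}{\lceil 2r/3\rceil}$ whose sum, after the usual concentration estimate, is $\le 4\binom{r}{\lceil 2r/3\rceil}\sqrt r / (\text{something})$; but crucially $\binom{r}{\lceil 2r/3\rceil} \le \binom r{\lfloor r/2\rfloor} \le t$, and the number of distinct such low sets across all maximal $M$ is at most this sum (not multiplied by $t$, because distinct $M$'s low shadows overlap heavily near $\emptyset$)... making that overlap argument rigorous — i.e. that $|\D \cap \Le_k| \le t$ for all $k$ and that the tail $\sum_{k \le 2r/3 - 4\sqrt r}$ is genuinely small — is where the real work lies, and I expect to need the skipless-chain decomposition of the whole of $\D$ (from \S\ref{S:downsets}, which exists for any downset) to assert $|\D \cap \Le_k|\le t$ cleanly, plus a binomial-tail estimate to trade the window width $4\sqrt r$ against $r/3$.
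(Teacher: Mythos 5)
There is a genuine gap, and it is exactly the step you flag as ``the main obstacle'': bounding the total size of the levels of a heavy downset at heights at most $\tfrac23 r$ by $O(t\sqrt r)$. That bound \emph{is} the content of the proposition (the top $r/3$ levels are trivially at most $t$ each, since every level is an antichain in a heavy downset -- note the correct trivial per-level bound is $t$, not $r$), and none of the tools you propose can deliver it. Skipless chains / Dilworth partitions and LYM-type sums only reproduce $|\HH_k|\le t$ for every $k$, i.e.\ the trivial bound $rt+1$; knowing that a window of $4\sqrt r$ levels around $2r/3$ contributes at most $4t\sqrt r$ says nothing about the $\sim 2r/3$ levels \emph{below} the window, which is where the saving must come from. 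Your per-maximal-element heuristic also does not work: for an $r$-set $M$, $\sum_{k\le 2r/3}\binom rk$ is $\Theta(2^r)$ (the sum is dominated by the middle levels of $\dw M$, not by $\binom{r}{\lceil 2r/3\rceil}$), and the claim that the low shadows of distinct maximal sets ``overlap heavily near $\emptyset$'' is false in general -- in the conjectured extremal configuration the downsets of maximal sets lying in different blocks $X_i$ meet only in $\emptyset$ -- and you give no mechanism for controlling the multiplicity when summing over the $t$ maximal sets.

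The missing idea in the paper's proof is a \emph{local} use of heaviness together with Kruskal--Katona, applied to upsets of low sets rather than to shadows of the generators. For a $k$-set $X\in\HH$ ($k<r$), heaviness forces $|\HH_i\cap \up X|\le|\HH_r\cap\up X|$ for every $i$ (otherwise one could swap the $r$-sets above $X$ for a larger antichain), in particular $|\HH_r\cap\up X|\ge|\Delta(\HH_r\cap\up X)\cap\up X|$; applying Kruskal--Katona inside $\up X\cong\B(n-k)$ yields $|\HH_r\cap\up X|\ge\binom{2(r-k)-1}{r-k}$, hence $X$ has at least $2(r-k)-1$ upper covers in $\HH$. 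Double counting the containment edges between consecutive levels then gives $|\HH_k|\le\frac{k+1}{2(r-k)-1}|\HH_{k+1}|$, which below level $2r/3$ compounds to $|\HH_i|\le\frac12|\HH_{i+2\sqrt r}|$; a geometric series over blocks of $2\sqrt r$ levels bounds the bottom $2r/3$ levels by $4t\sqrt r$, and adding $t$ per level for the top $r/3$ levels gives the stated bound. Without some such minimum-degree/expansion estimate converting heaviness into geometric decay of the low level sizes, the window-plus-trivial-bound accounting you outline cannot close.
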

\begin{proof}
Let $\HH$ be a heavy downset generated by $r$-sets.   Let $X \in \HH$ be a $k$-set, $k < r$.  We first find a lower bound on the number of upper covers of $X$ in $\HH$.

{\bf Claim 1.}  There are at least $2(r - k) - 1$ upper covers of $X$ in $\HH$.\\[-.3cm]

Let $\HH_i$ denote the family of sets in $\HH$ at level $i$ in $\Bn$ (that is, the $i$-subsets of $[n]$ in $\HH$) and let $\up X$ denote the upset  generated by $X$ in $\Bn$.  Then
$$| \HH_i  \ \cap \up X| \ \le \ | \HH_r  \ \cap \up X|$$
as otherwise $ (\HH_r  -\! \up X) \cup (  \HH_i  \cap \! \up X)$ is an antichain in $\HH$ of size greater than $t$, a contradiction.

Because the maximal elements of $\HH$ are $r$-sets, $\HH_{r-1} \cap \up X = \Delta(\HH_r \cap \up X) \cap \up X$.  Since $\HH$ is heavy,  
 \begin{equation}\label{e:top}
 |\HH_r \cap \up X| \ge  |\Delta(\HH_r \cap \up X) \cap \up X| .
 \end{equation}

Observe that $\Delta(\HH_r \cap \up X) \cap \up X$ is the shadow of $\HH_r \cap \up X$ in $\up X$ where  $\up X$ is isomorphic to the Boolean lattice $\mathcal{B}(n - k)$.   The Kruskal-Katona theorem (\cite{Ka}, \cite{Kr}) shows that if a family $\F$ of $(r-k)$-element sets in $\mathcal{B}(n - k)$ has
$|\F| < \binom{2(r - k) - 1}{r - k}$ then $|\Delta(\F)| > |\F|$.  In view of (\ref{e:top}), we have that
$$|\HH_r \cap \up X| \ge \binom{2(r - k) - 1}{r - k} .$$
Since each set in $\HH_r \cap \up X$ is the union of $(k+1)$-element sets containing $X$, all of which must be members of 
$\HH$, $X$ has at least $2(r - k) -1$ upper covers in $\HH$.  This verifies Claim~1.

Every $k+1$-set in $\HH$ covers exactly $k+1$ members of $\HH_k$.  Each set in $\HH_k$ is covered by at least  $2(r - k) -1$ in $\HH_{k+1}$ for $k = 0, 1, \ldots, r-1$.  Counting the edges in the bipartite containment graph induced by levels $k$ and $k+1$ of $\HH$ verifies\\
\begin{equation}\label{e: consecutive}
| \HH_k | \ \le \ \frac{k+1}{2(r - k) - 1} \cdot | \HH_{k+1} |, \  \ k = 0, 1, \ldots, r - 1 .\\
\end{equation}

{\bf Claim 2.}
For $0 \le i \le  \frac{2}{3}r-2\sqrt{r}$, $| \HH_i|  \le \frac{1}{2} | \HH_{i + 2\sqrt{r}}|$.\\[-.3cm]

To verify this, first observe that repeated application of the inequality in (\ref{e: consecutive}) shows that $| \HH_i|  \le c_i  | \HH_{i + 2\sqrt{r}}|$ where
\begin{equation}\label{e:constants}
c_i = \frac{i+1}{2(r - i) - 1} \cdot \frac{i+2}{2(r - i) - 3}  \cdot \frac{i+3}{2(r - i) - 5} \cdots \frac{i+2\sqrt{r}}{2(r - i) - 4\sqrt{r} + 1} \ .
\end{equation}
By comparing terms in these constants, we see that $c_i \le c_{\frac{2}{3}r-2\sqrt{r}}$ for $i = 0, 1, \ldots,  \frac{2}{3}r-2\sqrt{r}$.
For each $j = 1, 2, \ldots, \sqrt{r}$, and $i =  \frac{2}{3}r-2\sqrt{r}$, the $j^{\mathrm{th}}$ factor in (\ref{e:constants}) is bounded above
as follows:
$$\frac{ 2r/3 - 2\sqrt{r} + j}{2r/3 + 4\sqrt{r} - (2j-1)} \ \le \ \frac{2r/3 - \sqrt{r}}{2r/3 + 2\sqrt{r}}.$$
For each $j =  \sqrt{r} +1, \sqrt{r} + 2, \ldots, 2\sqrt{r}$, the $j^{\mathrm{th}}$ factor in  (\ref{e:constants}) is bounded above by 1.  Therefore, for
each $i = 0, 1, \ldots,  \frac{2}{3}r-2\sqrt{r}$, 
$$c_i \ \le \ c_{\frac{2}{3}r-2\sqrt{r}}\ \le \ \left(\frac{2r/3 - \sqrt{r}}{2r/3 + 2\sqrt{r}}\right)^{\sqrt{r}} .$$
Claim 2 now follows from the fact that
$$\left(\frac{2r/3 - \sqrt{r}}{2r/3 + 2\sqrt{r}}\right)^{\sqrt{r}} = \left(1 - \frac{3\sqrt{r}}{2r/3 + 2\sqrt{r}}\right)^{\sqrt{r}} \le 
\exp\left(-\frac{3r}{2r/3 + 2\sqrt{r}}\right) < \frac{1}{2} .$$

Partition the bottom $2r/3$ levels of $\HH$ into sets of $2\sqrt{r}$ consecutive levels.  Use the trivial bound of $t$ for each of the $2\sqrt{r}$
levels in the first part, that is, the top part of the bottom  $2r/3$ levels of $\HH$. Then Claim 2 shows that the total size of the $j^{\mathrm{th}}$ set of $2\sqrt{r}$ consecutive levels is bounded above by $(1/2)^{j-1} \cdot 2\sqrt{r} \cdot t$.  Thus, the bottom  $2r/3$ levels of $\HH$ have total size bounded above by $4\sqrt{r} \cdot t$.  Bounding the size of each of the top $r/3$ levels by $t$ completes the proof. \end{proof}

\begin{proposition}\label{P:r=3}
Conjecture \ref{topheavyconj} is true for $r = 3$, namely $f_3(t)  \le  2.5t + 1.$
\end{proposition}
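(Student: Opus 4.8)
\emph{Setup and reduction to the connected case.} Let $\HH$ be a heavy downset generated by $t$ $3$-sets, so $\HH$ has height at most $3$; write $\HH_i$ for the set of $i$-element members of $\HH$, so $\HH_0=\{\emptyset\}$ and $|\HH_3|=t$. Put $m=|\HH_1|$ and $e=|\HH_2|$. Since $|\HH|=1+m+e+t$, the bound $f_3(t)\le 2.5t+1$ is exactly the assertion $m+e\le\tfrac32 t$, and this is what I will aim for. Form the graph $G$ with vertex set $\HH_1$ and edge set $\HH_2$. No antichain of $\HH$ contains $\emptyset$, so $w(\HH)$ and $|\HH_3|$ are each additive over the connected components of $\HH\setminus\{\emptyset\}$, and these components correspond to the connected components of $G$ (a $3$-set of $\HH$ lies over a unique component, since its $2$-subsets are edges). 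As each component contains at least as many $3$-sets as its width and the two sums agree, every component is itself a heavy downset generated by $3$-sets; so it suffices to prove $m+e\le\tfrac32 t$ when $G$ is connected.

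\emph{Two inequalities and the dense case.} Since $\HH_2$ is an antichain and $w(\HH)=t$, we get $e\le t$. By the $k=1$ instance of Claim~$1$ in the proof of Proposition~\ref{P:f_h}, every $\{x\}\in\HH_1$ lies below at least $2(3-1)-1=3$ members of $\HH_2$, i.e.\ $G$ has minimum degree $\ge 3$; summing degrees gives $2e\ge 3m$, that is $m\le\tfrac23 e$. Hence if $e\ge 2m$ then $m+e\le\tfrac32 e\le\tfrac32 t$ and we are finished. So from now on we are in the \emph{sparse} case $e<2m$, in which $G$ has average degree less than $4$.

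\emph{The sparse case.} The extra ingredient is a local rigidity forced by heaviness. If $\deg_G(x)=3$, say $x$ has neighbours $y,z,w$, then every $3$-set of $\HH$ through $x$ is $\{x\}$ together with two of $y,z,w$, so $x$ lies in at most three $3$-sets; since it lies in at least three, all of $\{x,y,z\},\{x,y,w\},\{x,z,w\}$ belong to $\HH_3$, and in particular $\{y,z\},\{y,w\},\{z,w\}\in\HH_2$, so $\{x,y,z,w\}$ spans a $K_4$ in $G$. If every vertex of $G$ had degree $3$ this would force $G$ itself to be a single $K_4$, but then $\HH=[4]^{(\le 3)}$ has width $6>4\ge t$, contradicting heaviness; so $G$ has a vertex of degree $\ge 4$. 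The plan is to play these forced $K_4$'s against the absence of antichains larger than $t$: the level counts alone give only $m+e\le e+\tfrac23 e\le\tfrac53 t$, and one recovers the missing $\tfrac16 t$ by showing that when $m+e$ is close to $\tfrac53 t$ the graph $G$ is nearly $3$-regular, hence is built almost entirely from such $K_4$'s, and a block-by-block analysis of how they can attach to the (few) vertices of degree $\ge 4$ — equivalently, a tailored antichain on the resulting structure — yields $m+e\le\tfrac32 t$, with equality precisely when $\HH$ is a disjoint union of copies of $[5]^{(\le 3)}$ (the bottom three levels of $\mathcal B(5)$), matching the conjectured extremal configuration when $t$ is a multiple of $10$.

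\emph{Where the difficulty lies.} The reduction to the connected case, the inequalities $e\le t$ and $m\le\tfrac23 e$, and the dense case are all routine. The real work is the sparse case: its level-by-level consequences of heaviness stop at $m+e\le\tfrac53 t$, so closing the final $\tfrac16 t$ genuinely requires using ``$w(\HH)\le t$'' beyond what it says about consecutive shadows — via the $K_4$-rigidity above together with a careful count near the vertices of degree $\ge 4$. That is the step I expect to occupy most of the proof.
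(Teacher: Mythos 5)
Your setup is fine as far as it goes: the reduction of $f_3(t)\le 2.5t+1$ to $|\HH_1|+|\HH_2|\le\tfrac32 t$, the inequalities $e\le t$ (since $\HH_2$ is an antichain) and $2e\ge 3m$ (minimum degree $3$, from Claim~1 of Proposition~\ref{P:f_h}), the observation that a degree-$3$ singleton forces a $K_4$-type upset (this is exactly the structure $\up X=\{\{1\},\{1,2\},\{1,3\},\{1,4\},\{1,2,3\},\{1,2,4\},\{1,3,4\}\}$ used in the paper), and the disposal of the case $e\ge 2m$. But the proof stops precisely where the real content begins. In the sparse case $e<2m$ you offer only a plan (``nearly $3$-regular, hence built almost entirely from such $K_4$'s, and a block-by-block analysis \dots yields $m+e\le\tfrac32 t$''), and you say yourself that this is the step expected to occupy most of the proof. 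Nothing in the proposal shows how the forced $K_4$'s can or cannot overlap, how they attach to higher-degree vertices, or how heaviness is converted into the exact inequality $m+e\le\tfrac32 t$ rather than the $\tfrac53 t$ that the level-by-level counts give; ``nearly $3$-regular'' is a soft statement that does not by itself produce an exact bound. So there is a genuine gap: the argument that would close the missing $\tfrac16 t$ is absent.

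For comparison, the paper shows that the dichotomy you set up is unnecessary because the sparse case never really occurs: it proves that the \emph{average} number of upper covers of a singleton is at least $4$, i.e.\ the edge count between $\HH_1$ and $\HH_2$ is at least $4|\HH_1|$, which gives $e\ge 2m$ directly and hence the bound. The mechanism is an iterative peeling argument: whenever some singleton $X_j$ has exactly $3$ upper covers in the current residual family, its (residual) upset is the rigid $7$-element configuration you identified and is removed; the key lemma, proved by an explicit antichain exchange (replacing certain maximal $3$-sets by $2$-sets above the offending singletons to build an antichain larger than $\max(\HH)$), is that after each removal every remaining singleton still has at least $3$ upper covers in what remains. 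When the process stops, the surviving singletons have at least $4$ residual upper covers, and counting edges as $6s+4m$ over $s+m$ singletons gives the average of $4$. This antichain-exchange control of how the $K_4$-like upsets interact is exactly the ingredient your sparse case would need and does not supply; until you carry out such an argument (or an equivalent one), the proposal does not prove the proposition.
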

\begin{proof}
Let $\HH$ be a top heavy downset of height 3.  We claim that it is enough to prove that the average number of upper covers
of a singleton in $\HH$ is at least 4.   Suppose this holds.  Since each member of $\HH_2$ covers two members of $\HH_1$, we would have
$|\HH_2| \ge 2 |\HH_1|$.  Therefore, $|\HH| = \sum_{i = 0}^3 |\HH_i | \le t + t + (1/2)t + 1 = 2.5 t + 1$.

Let $X\in \HH_1$, say $X$ = \{1\}.  Since $\HH$ is heavy, its maximals each have size 3, so $X \subset Y = \{1, 2, 3\}$.  Then $X$ has upper covers $\{1, 2\}$ and $\{1, 3\}$.  Since $\HH$ has width equal to the number of its maximals, $X \subset  Z$ for $Z \ne Y$ and $|Z| = 3$.  At least one 2-element subset of $Z$ gives a third upper cover of $X$.

Suppose that $X$ has exactly 3 upper covers.  Then $\up X$ consists of $X$, 3 upper covers and 3 3-element sets -- otherwise $X$
has more than 3 upper covers.   Without loss of generality,  
$$\up X = \{ \{1\}, \{1, 2\}, \{1, 3\}, \{1,4\}, \{1, 2, 3\}, \{1, 2, 4\}, \{1, 3, 4\}\}.$$

Consider $\HH - \up X$.  We claim that each singleton $Y \in \HH - \up X$ has at least 3 upper covers in $\HH - \up X$.  If not, without loss
of generality, we may
take $Y = \{4\}$ and its upset in $\HH$ is
$$\up Y = \{ \{4\}, \{2,4\}, \{3, 4\}, \{1,4\}, \{2, 3, 4\}, \{1, 2, 4\}, \{1, 3, 4\} .$$
In particular, there is exactly one maximal above $Y$ that is not above $X$.  Then the 5 2-element sets containing $X$ or $Y$ together 
with the maximals of $\HH$ not above $X$ or $Y$ would be a larger antichain than $\max(\HH)$, a contradiction.

Let $X_1 = X$.  If some $Y$ has exactly 3 upper covers in $\HH - \up X_1$, let $X_2 = Y$, otherwise we stop with all singletons in
$\HH - \up X_1$ with at least 4 upper covers in $\HH - \up X_1$. 

Suppose that we have a sequence of singletons $X_1, X_2, \ldots, X_k$ such that each $X_j$ has exactly 3 upper covers in 
$\HH - \bigcup_{i = 1}^{j-1} \! \up X_i$.  As above, $\up X_j - \bigcup_{i = 1}^{j-1} \! \up X_i$ consists of $X_j$, 3 2-element sets and
3 3-element sets.   We again argue that each $Y \in \HH -  \bigcup_{i = 1}^{k} \! \up X_i$ has at least 3 upper covers in $\HH -  \bigcup_{i = 1}^{k} \! \up X_i$.  If some $Y$ does not, because $Y$ has at least 3 upper covers in $\HH$, $Y = \{y\}$ is contained in a 2-element set
in some $\up X_i$, say $j$ is the maximum such, $X_j = \{x\}$, and $\{x, y\} \in \up X_j$.  Then $\{u, x, y\}, \{v, x, y\}  \in \up X_j$.  The
only possible 3-element sets in $\HH$ that contain $Y$ are in $\bigcup_{i = 1}^{k} \! \up X_i$ or equal $\{y, u, v\}$.  It follows that
$$\left(\left(\HH_2 \cap \bigcup_{i = 1}^{k} \! \up X_i\right) \cup \{u, y\} \cup  \{v, y\}\right) \cup \left(\HH_3 - \up \{X_1, X_2, \ldots,X_k, Y\}\right)$$
is an antichain larger than the set of maximals of $\HH$, a contradiction.

If some $Y$ has exactly 3 upper covers in $\HH - \bigcup_{i = 1}^{k} \up \! X_i $, let $X_{k+1} = Y$, otherwise we stop with all singletons in $\HH - \bigcup_{i = 1}^{k}\! \up X_i $ with at least 4 upper covers in $\HH - \bigcup_{i = 1}^{k}\! \up X_i$.

This procedure stops after $s$ steps.  Consider the edge set in the bipartite containment graph induced by $\HH_1 \cup \HH_2$.  There are $3s$ members of $\HH_2$ in $\bigcup_{i = 1}^{s}\! \up X_i$ that account for $6s$ edges in this graph.  Each of the $m$ singletons in  $\HH - \bigcup_{i = 1}^{s}\! \up X_i$
is incident with at least 4 edges, none of which are incident with the $3s$ 2-element sets in $\bigcup_{i = 1}^{s}\! \up X_i$.  This gives
a total of at least $6s + 4m$ edges incident with exactly $s + m$ members of $\HH_1$.  Thus, the average number of covers of singletons
in $\HH$ is at least  $(6s+4m)/(s+m)\geq 4$.
\end{proof}

We think that it is unlikely that the approach of Proposition \ref{P:r=3} will work for $r > 3$.

%%%%%%%%%%%%%%%%%%%%%%%%%%%%%%%%%%%%%%%%%%%%%%%%%%%%%%%%%%%%%%%%%


\begin{thebibliography}{99}
 
\bibitem{BB} B. Bollob\'as, {\it Combinatorics}, Cambridge University Press, Cambridge (1986)

\bibitem{BL} B.Bollob\'as and I.Leader, Exact face-isoperimetric inequalities,
{\it European J.\ Combinatorics} {\bf 11} (1990) 335-340
 
\bibitem{DF} D. E. Daykin and P. Frankl, Inequalities for subsets of a set and KLYM posets, {\it SIAM J. Alg. Disc. Meth.} {\bf 4} (1983) 67 - 69

\bibitem{D} R. P. Dilworth, A decomposition theorem for partially ordered sets, {\it Ann. Math.} {\bf 51} (1950) 161 - 165

\bibitem{EL} K. Engel and U. Leck, Optimal antichains and ideals in Macaulay posets, {\it Graph Theory and Combinatorial Biology}
(eds., L. Lov\'{a}sz et al.) J\'{a}nos Bolyai Soc. Math. Stud. {\bf 7}, Budapest (1999) 199 - 222

\bibitem{G} J. Goldwasser, personal communication (2015)

\bibitem{Ka} G. O. H. Katona, A theorem on finite sets, {\it Theory of Graphs} (eds., P. Erd\"os and G. O. H. Katona), 
Akad\'emiai Kiad\'o, Budapest (1968) 187 - 207

\bibitem{Kr} J. B. Kruskal, The number of simplices in a complex,  {\it Mathematical Optimization Techniques}, Univ. California Press, Berkeley (1963) 251 - 278

\bibitem{GK} C. Greene and D. J. Kleitman, Strong versions of Sperner's theorem, {\it J. Combin. Theory Ser. A} {\bf 20} (1976) 80 - 88

\bibitem{S} E. Sperner, Ein Sats \"{u}ber Untermengen einer endlichen Menge, {\it Math. Z.} {\bf 27} (1928) 544-548


\end{thebibliography}
\end{document}